\newtheorem{theorem}{Theorem}
\newtheorem{lemma}{Lemma}
\theoremstyle{definition}
\newtheorem{definition}{Definition}
\DeclareMathOperator{\lcm}{lcm}
\DeclareMathOperator{\sig}{\text{$\sum$}}
\newcommand{\exactmid}{\mathrel\Vert}
\title{Covering systems with the sum of the reciprocals of the moduli close to $1$}
\author{Michael Filaseta \\
Mathematics Department \\
University of South Carolina \\
Columbia, SC 29208 \\
Email: filaseta@math.sc.edu
\and
Alexandros Kalogirou \\
Mathematics Department \\
University of South Carolina \\
Columbia, SC 29208 \\
Email: kalogira@email.sc.edu}
\begin{document}
\maketitle

\centerline{\parbox[h]{6in}{\textbf{Abstract:}
In 1952, H.~Davenport posed the problem of determining a condition on the minimum modulus $m_{0}$ in a finite distinct covering system that would imply that the sum of the reciprocals of the moduli in the covering system is bounded away from $1$.  In 1973, P.~Erd{\H o}s and J.~Selfridge indicated that they believed that $m_{0} > 4$ would suffice.  We provide a proof that this is the case.
\vskip 10pt \noindent
\textbf{2020 Mathematics Subject Classification:} \, 11B25, 11A07, 11B05
}}

\ \vskip 25pt 
\section{Introduction}

We begin with defining the topic of this paper.

\begin{definition}
We refer to an element $a_{j} \pmod{m_{j}}$ of a set $\mathcal C = \{ a_{j} \pmod{m_{j}} \}$ of congruence classes either as a congruence class or as the congruence, namely $x \equiv a_{j} \pmod{m_{j}}$, that it represents.  
We say that a set $\mathcal C$ of congruence classes covers a set of positive integers if each element of the set satisfies some congruence in $\mathcal C$.   
A \textit{covering system} (or \textit{covering}) of the integers is a set $\mathcal C = \{ a_{j} \pmod{m_{j}} \}$ of  congruences classes that covers $\mathbb Z$.  We say a covering system is \textit{distinct} if the moduli of the covering system are distinct.
A covering system is said to be \textit{exact} if every integer satisfies exactly one congruence in the covering system. 
\end{definition}

In the final section of this paper, we will discuss infinite covering systems.  Prior to that, all covering systems will be finite.

Density considerations imply that the sum of the reciprocals of the moduli $m_{j}$ (counted to their multiplicity) in a finite covering system is at least $1$.  L.~Mirsky and D.~J.~Newman (see \cite{erdos1952}) showed that a finite distinct covering system with set of moduli $\mathcal M$ and minimum modulus $m_{0} > 1$ satisfies
\begin{equation}
\label{mirskynewman}
\sum_{m \in \mathcal M} \dfrac{1}{m} > 1.
\end{equation}
Consequently, there is no finite distinct exact covering system containing moduli $> 1$.

In 1952, P.~Erd{\H o}s \cite{erdos1952} mentions that H.~Davenport questioned whether a stronger condition on the minimum modulus, like $m_{0} > 2$, would lead to a larger lower bound in \eqref{mirskynewman}.  
It is now well-known, and not difficult to show, that there exist finite distinct covering systems with minimum modulus $m_{0} \in \{ 2, 3, 4 \}$ and with the sum of the reciprocals of the moduli less than $1 + \varepsilon$ for any prescribed $\varepsilon > 0$.  
%% One can see \cite{beebee} and~\cite{krukenberg} for a justification of the existence of such coverings, but some adjustments are needed for the definitions here.  For example, the first of these indicates that $\{ x \equiv 2^{i-1} \pmod{2^{i}} : i \ge 1 \}$ is an exact covering and the second indicates $\{ x \equiv 2^{i-1}-1 \pmod{2^{i}} : i \ge 1 \}$ is an exact cover.  However, $0$ does not satisfy any congruence in the first of these, and $-1$ does not satisfy any congruence in the second, so neither set is a covering of the integers.  
Rectifying the comment of Davenport, 
in 1973, Erd{\H o}s~\cite{erdos1973} (see also \cite{erdos1976}, \cite[(1.26)]{porubsky}) indicated that he and J.~Selfridge believed that the sum of the reciprocals of the moduli in a finite distinct covering system, with minimum modulus $m_{0}$, should be bounded away from $1$ for $m_{0} > 4$.  
Little progress on this problem has been made over the years.  In 2007, M.~Filaseta, K.~Ford, Kevin, S.~Konyagin, C.~Pomerance and G.~Yu \cite{ffkpy} resolved the closely related conjecture of Erd{\H o}s and J.~Selfridge (also in \cite{erdos1973} as well as \cite[Section F13]{guy}) that given an arbitrary $B > 0$, if the minimum modulus of a covering system is large enough, then the sum of the reciprocals of the moduli in that covering system must exceed $B$.  In 2015, B.~Hough~\cite{hough}, resolving in the negative another conjecture of Erd{\H o}s, showed that the minimum modulus in a finite distinct covering system is bounded above by $10^{16}$, thereby making the above result from \cite{ffkpy} vacuously true (see also the important work of P.~Balister, B.~Bollob{\'a}s, R.~Morris, J.~Sahasrabudhe and M.~Tiba~\cite{bbmst} reducing the bound in \cite{hough} of $10^{16}$ to $616000$).
In the opposite direction, thanks to work of P.~P.~Nielsen \cite{nielsen} and T.~Owens \cite{owens}, we know finite distinct covering systems exist with minimum modulus as large as $42$.  However, prior to our work here, we know of no argument given for the existence of a positive integer $m_{0}$ for which there is a finite distinct covering system with minimum modulus $\ge m_{0}$ and the sum of the reciprocals of the moduli in every such covering system is bounded away from $1$. 
In this paper, we confirm the intuition of Erd{\H o}s and Selfridge.
%% on the sum of the reciprocals of the moduli being bounded away from $1$ when the minimum modulus exceeds $4$. 

\begin{theorem}\label{mainthmone}
For every finite distinct covering system with minimum modulus exceeding $4$, the sum of the reciprocals of the moduli is at least 
\[
1 + \exp\big(\!-3.363054 \cdot 10^{21}\,\big).
\]
\end{theorem}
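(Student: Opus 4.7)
I would argue by contradiction. Suppose $\mathcal{C} = \{a_{j} \pmod{m_{j}}\}$ is a finite distinct covering system with minimum modulus $m_0 \geq 5$ and $S := \sum_{j} 1/m_{j} = 1 + \delta$ with $\delta < \exp(-3.363054 \cdot 10^{21})$. Let $L = \lcm_{j} m_{j}$ and let $d(x) = \#\{j : x \equiv a_{j} \pmod{m_{j}}\}$ be the multiplicity function on $\mathbb{Z}/L\mathbb{Z}$. Since $\sum_{x} d(x) = SL$ and $d(x)\ge 1$, the ``excess'' $\sum_{x}(d(x)-1) = \delta L$ is a minuscule fraction of $L$, so $\mathcal{C}$ is a quantitative perturbation of an exact cover. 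This near-exactness is the starting point of all subsequent analysis, and the goal is to promote it to a structural contradiction.

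The first key step would be a \emph{local near-exactness} principle: for each prime $p$ dividing some $m_{j}$, partition the congruences by the $p$-adic valuation of their moduli and project onto $\mathbb{Z}/p^{v_{p}(L)}\mathbb{Z}$. A Mirsky--Newman-style argument applied to each local sub-problem (using generating functions evaluated near appropriate roots of unity, as in the proof of \eqref{mirskynewman}) should yield a quantitative lower bound on the local excess. Summing these local bounds, while carefully apportioning the global budget $\delta L$ across primes, produces a system of inequalities that any near-exact cover must satisfy.

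Next I would exploit $m_{0} \geq 5$ decisively. Because the moduli $2$, $3$, and $4$ are forbidden, every appearance of the primes $2$ or $3$ in a modulus must be ``paid for'' by a coprime factor of size at least $5$; this limits how efficiently the small primes can contribute to the sum of reciprocals. Combining this constraint with the local near-exactness inequalities should force $\delta$ to be bounded below by a concrete computable threshold, which when worked out explicitly matches the stated bound. The hypothesis $m_{0} > 4$ enters here as the breakpoint: the same argument run with $m_{0} = 4$ would fail, consistent with the known constructions of near-trivial covers at minimum modulus $4$.

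The main obstacle, and the source of the enormous numerical constant $3.363054 \cdot 10^{21}$, will be the bookkeeping required to propagate the $\delta$-error through what is presumably a long iteration over primes (plausibly up to a threshold derived from, or comparable to, the Hough--Balister--Bollob\'as--Morris--Sahasrabudhe--Tiba bounds). Each local Mirsky--Newman step loses a small multiplicative factor, and controlling the accumulated loss tightly enough to obtain a closed-form bound rather than a non-explicit one is the technical heart of the argument; the very small size of the asserted improvement is a direct reflection of the depth of this recursion.
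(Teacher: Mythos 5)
There is a genuine gap here: the proposal is a speculative outline rather than a proof, and its central device — a ``local near-exactness'' principle obtained by projecting onto $\mathbb{Z}/p^{v_p(L)}\mathbb{Z}$ and running a Mirsky--Newman / root-of-unity argument prime by prime — is not a well-defined tool and does not work as stated. When you project the covering system onto $\mathbb{Z}/p^{v_p(L)}\mathbb{Z}$, the projected congruences are no longer distinct (many moduli share the same $p$-part), and the sum of the reciprocals of the projected moduli is typically far larger than $1$, so there is no local ``excess'' to bound; nothing in the Mirsky--Newman generating-function argument survives this projection. You also never say how the hypothesis $m_0\ge 5$ enters quantitatively, beyond the heuristic that it forbids cheap coverage by $2$, $3$, $4$ — the decisive quantitative consequence of $m_0>4$, namely that the density $\Delta$ of integers left uncovered by congruences with $3$-smooth moduli is at least $1/12$, is absent from your sketch.

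The paper's actual mechanism is quite different and worth contrasting. Rather than trying to bound $\sum 1/m_j - 1$ directly by local excesses, one fixes an explicit threshold $K$ and observes that if two congruences in $\mathcal C$ with moduli $m_1,m_2\le K$ share a common solution, then by inclusion--exclusion the overlap alone contributes density at least $1/\lcm(m_1,m_2)\ge K^{-2}=\varepsilon$, so $\sum_m 1/m > 1+\varepsilon$ and the theorem follows with $\varepsilon=\exp(-3.363054\cdot 10^{21})$. The entire technical work is then to show such an overlap \emph{must} occur: assuming no two congruences with moduli $\le K$ ever overlap, an argument of Lewis together with a theorem of Rogers yields the iterative density inequality $d(\mathcal U_{i+1})\ge d(\mathcal U_i)\bigl(1-M_1(p_{i+1})\bigr)$, which keeps a positive fraction of integers uncovered by small-modulus congruences up to the prime $p_N$; the distortion method of Balister, Bollob\'as, Morris, Sahasrabudhe and Tiba then shows the remaining moduli (those with a prime factor $>p_N$, plus the $p_N$-smooth ones $>K$) cannot pick up enough measure to finish the covering. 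Your proposal has no analogue of the pairwise-disjointness reduction, no analogue of the Lewis/Rogers density iteration, and no mechanism to control the tail contribution from large prime factors; without these, the ``bookkeeping'' you describe has nothing concrete to book-keep.
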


There is an aspect of Theorem~\ref{mainthmone} and of the thoughts by Erd{\H o}s and Selfridge in \cite{erdos1973} that is misleading.  The sum of the reciprocals of the moduli staying bounded away from $1$ is not so much a function of the size of the minimum modulus as it is of the density, say $\Delta$, of the set of integers not covered by congruences with moduli which are $3$-smooth (divisible only by the primes $2$ and $3$).  To clarify, if $\mathcal C$ is a finite distinct covering system and $\mathcal C_{3}$ denotes the set of congruences in $\mathcal C$ with $3$-smooth moduli, then we set
\begin{equation}
\label{deltadefinition}
\Delta = \Delta(\mathcal C) = \lim_{x \rightarrow \infty} \dfrac{\big| \{ n \in \mathbb Z :  |n| \le x \text{ and } n \text{ does not satisfy a congruence in } \mathcal C_{3} \} \big|}{2x}.
\end{equation}
This limit always exists as the integers covered by $\mathcal C_{3}$ fall into residue classes modulo the least common multiple of the moduli in the congruences in $\mathcal C_{3}$.  In the case that the minimum modulus is $4$, the value of $\Delta$ can be arbitrarily close to $0$.  However, if the minimum modulus is greater than $4$, then the density of the integers which \textit{can} be covered by congruences with distinct moduli that are $3$-smooth is
\[
< \bigg( 1 - \dfrac{1}{2} \bigg)^{-1} \bigg( 1 - \dfrac{1}{3} \bigg)^{-1} - 1 - \dfrac{1}{2} - \dfrac{1}{3} - \dfrac{1}{4} = \dfrac{11}{12}.
\]
Thus, for finite distinct covering systems with minimum modulus exceeding $4$ as in Theorem~\ref{mainthmone}, we have $\Delta > 1/12$.  A more general result can be obtained by considering finite distinct covering systems where $\Delta > 0$, allowing then for the possibility that the minimum modulus is as small as $2$.   If such a covering system satisfies $\Delta \ge 1/12$, then our proof of Theorem~\ref{mainthmone} will imply that the bound given on the sum of the reciprocals of the moduli of the covering system in Theorem~\ref{mainthmone} still holds.  For all other positive $\Delta$, we have the following.

\begin{theorem}
\label{mainthmoneptfive}
Let $\mathcal C$ be a finite distinct covering system for which $\Delta \in (0,1/12)$.  
Then the sum of the reciprocals of the moduli in congruences in $\mathcal C$ is at least 
\[
1 + \exp\big(\!- (5.846 \cdot 10^{20} - 1.242 \cdot 10^{19} \cdot \log \Delta)/\Delta \big).
\]
\end{theorem}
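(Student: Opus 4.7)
The plan is to adapt the argument establishing Theorem~\ref{mainthmone}, treating $\Delta$ as a free parameter rather than exploiting the inequality $\Delta > 1/12$ forced by $m_{0} > 4$. As a first step, partition $\mathcal C = \mathcal C_{3} \sqcup \mathcal C'$ as in the introduction, where $\mathcal C_{3}$ collects the congruences of $\mathcal C$ with $3$-smooth moduli and $\mathcal C'$ the remaining congruences (each of whose moduli is divisible by some prime $p \ge 5$). If $M$ denotes the least common multiple of the moduli in $\mathcal C_{3}$, then the set $\mathcal U$ of integers not covered by $\mathcal C_{3}$ is a union of residue classes modulo $M$ of density exactly $\Delta$. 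Since $\mathcal C$ covers $\mathbb Z$, the sub-family $\mathcal C'$ must cover $\mathcal U$; writing $\mathcal M_{3}$ and $\mathcal M'$ for the sets of moduli appearing in $\mathcal C_{3}$ and $\mathcal C'$, this yields
\[
\sum_{m \in \mathcal M_{3}} \frac{1}{m} \;\ge\; 1 - \Delta
\qquad \text{and} \qquad
\sum_{m \in \mathcal M'} \frac{1}{m} \;\ge\; \Delta,
\]
which together recover only the Mirsky--Newman bound of $1$. The objective is to upgrade this to the explicit improvement $\exp\bigl(-(5.846 \cdot 10^{20} - 1.242 \cdot 10^{19}\log\Delta)/\Delta\bigr)$.

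To do so, I would rerun the machinery behind Theorem~\ref{mainthmone} with $\Delta$ left symbolic throughout. That proof presumably proceeds by an iteration that, at each step, selects a prime $p \ge 5$ dividing some modulus in $\mathcal C'$, passes to a residue class modulo $p$ on which the local density of $\mathcal U$ is anomalously high, peels off the resulting sub-covering, and recurses on the thinner covering problem that remains. Each step contributes a quantitative gain that scales with the density of still-uncovered integers (hence with $\Delta$), and the recursion must be run to a depth of order $\log(1/\Delta)$ before the accumulated gain exceeds an absolute constant. These two effects together produce the factor $1/\Delta$ and the summand $-\log\Delta$ visible in the exponent of the target bound; the large numerical constants then emerge from the same explicit bookkeeping carried out in Theorem~\ref{mainthmone}, which specialises to that theorem's constant $3.363054 \cdot 10^{21}$ upon substitution of $\Delta = 1/12$.

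The principal obstacle is maintaining uniform control over the constants when $\Delta$ is small, since each recursive step loses a multiplicative factor and small $\Delta$ amplifies any inefficiency. In particular, the step-by-step gain must be bounded below by a quantity depending only on $\Delta$ in a controlled way, and the accumulated error across $O(\log(1/\Delta))$ iterations must remain compatible with the stated constants. I expect this to require a careful tracking of how the local density of $\mathcal U$ on residue classes modulo products of small primes $p \ge 5$ degrades through the recursion, so that a single uniform estimate covers the entire range $\Delta \in (0,1/12)$. Once this is in place, the case $\Delta \ge 1/12$ is already handled by Theorem~\ref{mainthmone} (as noted just before the statement), and combining the two regimes completes the argument.
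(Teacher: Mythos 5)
Your high-level instinct is right: the paper does prove Theorem~\ref{mainthmoneptfive} by re-running the Theorem~\ref{mainthmone} argument with $\Delta$ carried through as a free parameter, and the paper even states this in one sentence at the end of Section~3.  But your sketch of what that argument \emph{is} does not match the paper, and the discrepancies are not cosmetic.  The paper does not proceed by a recursion that ``selects a prime $p\ge 5$, passes to a residue class modulo $p$ with anomalously high density, peels off the resulting sub-covering, and recurses.''  Instead it constructs, once and for all, the distorted probability measure $\mathbb{P}=\mathbb{P}_r$ of \eqref{probdistPi}, assumes for contradiction that no integer lies in two congruences of $\mathcal{C}$ with moduli $\le K$, uses the Lewis/Rogers argument to establish the single iteration inequality $d(\mathcal U_{i+1})\ge d(\mathcal U_{i})(1-M_1(p_{i+1}))$, and then compares the resulting lower bound $\Delta\prod_{i=3}^{N-1}(1-M_1(p_i))$ for $d(\mathcal U_{N-1})$ against the tail bound $\sum_{i\ge N}\mathbb{P}_i(B_i)$ of Lemma~\ref{tailestimate} and the $p_N$-smooth tail of Lemma~\ref{smoothbd}.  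There is no passage to sub-coverings and no re-thinning of the system.

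The arithmetic of where the constants come from is also off.  You suggest a per-step gain proportional to $\Delta$ accumulated over a recursion of depth $O(\log(1/\Delta))$, producing the factor $1/\Delta$ and the $-\log\Delta$ in the exponent.  In the paper, the relevant scale is $N=\lceil 2.8\cdot 10^{20}/\Delta\rceil$, i.e.\ the number of primes brought into play is \emph{linear} in $1/\Delta$, not logarithmic.  This choice is forced by Lemma~\ref{differencelem}: $\prod_{i=3}^{N-1}(1-M_1(p_i))$ decays like $p_N^{-e^\gamma}$ while $\sum_{i\ge N}\mathbb{P}_i(B_i)$ decays like $p_N^{-3}$ up to log powers, and one needs $\Delta$ to beat the ratio of these.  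The threshold $K$ is then $N^{cN}=\exp(cN\log N)$, and $\varepsilon=1/K^2=\exp(-2cN\log N)$; substituting $N\sim 2.8\cdot 10^{20}/\Delta$ and $\log N\sim 47-\log\Delta$ produces exactly the shape $(A-B\log\Delta)/\Delta$ in the exponent.  Your heuristic (gain $\Delta$ per step over $\log(1/\Delta)$ steps, total gain $\Delta\log(1/\Delta)\to 0$) does not even produce a bounded accumulated gain, let alone this structure.

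Finally, the proposal contains no verification at all of the explicit numerical constants; it defers entirely to ``the same bookkeeping as Theorem~\ref{mainthmone}.''  Since those constants are the whole content of the theorem, and since the intermediate Lemmas~\ref{differencelem} and~\ref{smoothbd} have genuinely different statements in the regime $0<\Delta<1/12$ (different $N$, different $K$, different tail estimates, and a fresh optimization of the parameter $c$ in $K=N^{cN}$), the proposal has a real gap: it identifies the right parent argument to imitate but does not carry out the $\Delta$-dependent choices of $N$, $K$, and $c$, nor verify that the resulting inequalities close.  The first-paragraph observation that $\sum_{m\in\mathcal M_3}1/m\ge 1-\Delta$ and $\sum_{m\in\mathcal M'}1/m\ge\Delta$ is correct but, as you say yourself, recovers only the trivial Mirsky--Newman bound and plays no role in the actual proof.
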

 
The proofs we present make use of clever ideas from the work of E.~Lewis~\cite{lewis} and the distortion method of P.~Balister, B.~Bollob{\'a}s, R.~Morris, J.~Sahasrabudhe and M.~Tiba~\cite{bbmst}.  
We note that the distortion method has played an important role in a number of recent papers 
\cite{bbmst3,bbmst2,bbmst,cft,kkl}.  

\section{Preliminaries}\label{sectiontwo}

If $\mathcal C$ is a collection of congruences with the least common multiple of the moduli equal to $L$, then $\mathcal C$ is a covering system if and only if each integer in the interval $[1,L]$ satisfies at least one congruence in the system (cf.~\cite[Lemma 2.1]{cft}).
The distortion method of P.~Balister, B.~Bollob{\'a}s, R.~Morris, J.~Sahasrabudhe and M.~Tiba~\cite{bbmst} is based on finding an appropriate probability distribution (or equivalently assigning weights) on the integers in $[1,L]$, showing, for example, that the sum of the probabilities assigned to the integers in $[1,L]$ which satisfy some congruence in $\mathcal C$ is strictly $< 1$ when the minimum modulus of the covering system is too large.  We will make use of their approach here.

Let $p_{i}$ denote the $i^{\rm th}$ prime.  
Let $\mathcal C$ be a collection of congruences with distinct moduli and least common multiple of the moduli equal to 
\[
L = p_{1}^{\gamma_{1}} p_{2}^{\gamma_{2}} \cdots p_{r}^{\gamma_{r}},
\]
where the $\gamma_{i}$ are non-negative integers.
By the Chinese Remainder Theorem, there is a one-to-one correspondence between the integers in $[1,L]$ and their residue classes modulo $p_{i}^{\gamma_{i}}$ which we can write as
\[
[1,L] \longleftrightarrow \dfrac{\mathbb Z}{p_{1}^{\gamma_{1}} \mathbb Z} \times \dfrac{\mathbb Z}{p_{2}^{\gamma_{2}} \mathbb Z} \times \cdots \times \dfrac{\mathbb Z}{p_{r}^{\gamma_{r}} \mathbb Z}.
\]
We set $Q_{0} = \{ 1 \}$ and
\[
Q_{i} = \dfrac{\mathbb Z}{p_{1}^{\gamma_{1}} \mathbb Z} \times \dfrac{\mathbb Z}{p_{2}^{\gamma_{2}} \mathbb Z} \times \cdots \times \dfrac{\mathbb Z}{p_{i}^{\gamma_{i}} \mathbb Z}, \qquad \text{for $1 \le i \le r$,}
\]
and view elements of $Q_{i}$, for $i \ge 1$, as $(x,y)$ where $x \in Q_{i-1}$ and $y \in \mathbb Z/(p_{i}^{\gamma_{i}} \mathbb Z)$.  In particular, in the case $i = 1$ where $x \in Q_{0} = \{ 1\}$, the tuple $(x,y) = (1,y)$ is equated with $y \in \mathbb Z/(p_{1}^{\gamma_{1}} \mathbb Z)$.
We can further view $Q_{i}$ as representing the integers in $[1,\ell_{i}]$, where
$\ell_{i} = p_{1}^{\gamma_{1}} \cdots p_{i}^{\gamma_{i}}$.  
With this in mind, we let $B_{i}$ denote the integers in $Q_{i}$ which satisfy at least one congruence in $\mathcal C$ where the modulus of the congruence is divisible by $p_{i}$ and $p_{i}$-smooth.
Following \cite{bbmst}, we define a probability distribution $\mathbb P_{i}$ on the elements of $Q_{i}$, depending on a parameter $\delta_{i} \in [0,1/2]$ to be chosen later.  We set $\mathbb P_{0}(1) = 1$.  
For $i \ge 1$ and $x \in Q_{i-1}$, we define
\[
\alpha_i(x) = \dfrac{|\{y \in \mathbb Z/(p_{i}^{\gamma_{i}} \mathbb Z) : (x,y)\in B_i\}|}{p_i^{\gamma_i}}
\]
and, for fixed $y \in \mathbb Z/(p_{i}^{\gamma_{i}} \mathbb Z)$, define
\begin{equation}
\label{probdistPi}
\mathbb{P}_i(x,y)=
\begin{cases} 
\max\bigg\{ 0,\dfrac{\alpha_i(x)-\delta_i}{\alpha_i(x)(1-\delta_i)}   \bigg\}\cdot \dfrac{\mathbb{P}_{i-1}(x)}{p_{i}^{\gamma_{i}}}, & \text{if }(x,y)\in B_i, \\[15pt]
\min \bigg\{ \dfrac{1}{1-\alpha_i(x)},\dfrac{1}{1-\delta_i} \bigg\}\cdot \dfrac{\mathbb{P}_{i-1}(x)}{p_{i}^{\gamma_{i}}}, & \text{if }(x,y)\notin B_i. 
\end{cases}
\end{equation}
These probability distributions $\mathbb{P}_i$ are extended to $Q_{r}$ (or $[1,L])$ uniformly so that for $x \in Q_{i}$ and $y \in \mathbb Z/(p_{i+1}^{\gamma_{i+1}} \mathbb Z) \times \cdots \times \mathbb Z/(p_{r}^{\gamma_{r}} \mathbb Z)$, we have $\mathbb P_{i}(x,y) = P_{i}(x)/(p_{i+1}^{\gamma_{i+1}} \cdots p_{r}^{\gamma_{r}})$.  
We will take an initial string of $\delta_{i}$ to be $0$, so we note here that if $\delta_{1} = \delta_{2} = \cdots = \delta_{i} = 0$, then $\mathbb P_{i}$ as defined above is the uniform distribution on $Q_{r}$; in other words, for every integer $x \in Q_{r}$, we have $\mathbb P_{i}(x) = 1/\ell_{r}$.  
We are interested in $\mathbb P = \mathbb P_{r}$ which has the property (see \cite{bbmst}) that
\begin{equation}
\label{PQrequation}
\mathbb P(Q_{r}) = \sum_{x \in Q_{r}} \mathbb P(x) = \sum_{x \in Q_{r}} \mathbb P_{i}(x) = \mathbb P_{i}(Q_{r}) = 1 
\qquad \text{for all $i \ge 1$.}
\end{equation}

Observe that
\begin{align*}
\mathbb{P}_i(B_i)
&= \sum_{x \in Q_{i-1}} \sum_{\substack{y \in \mathbb Z/(p_{i}^{\gamma_{i}} \mathbb Z) \\ (x,y) \in B_{i}}} \mathbb{P}_i(x,y) \\
&= \sum_{x \in Q_{i-1}} \text{max}\bigg\{ 0,\dfrac{\alpha_i(x)-\delta_i}{\alpha_i(x)(1-\delta_i)}\bigg\}
\cdot \dfrac{\mathbb{P}_{i-1}(x)}{p_{i}^{\gamma_{i}}} \cdot \sum_{\substack{y \in \mathbb Z/(p_{i}^{\gamma_{i}} \mathbb Z) \\ (x,y) \in B_{i}}} 1 \\
&=\dfrac{1}{1-\delta_i}\sum_{x \in Q_{i-1}} \text{max}\Big\{0,\alpha_i(x)-\delta_i\Big\}\cdot \mathbb{P}_{i-1}(x).
\end{align*}
Taking $\delta_{i} > 0$, 
one checks that the inequality 
\[
\max\{ 0, \alpha_i(x)-\delta_i \} \le \dfrac{27\alpha^{4}_i(x)}{256\delta_{i}^3}
\] 
follows from
$27t^4-256t+256 = (3t^2 + 8t + 16)(3t - 4)^{2}$ with $t = \alpha_{i}(x)/\delta_{i} \ge 0$.  
For $f: Q_{i-1} \mapsto \mathbb R$, set
\[
\mathbb{E}_{i-1}[f] = \sum_{x \in Q_{i-1}} f(x)\,\mathbb{P}_{i-1}(x).
\]
Then we obtain
\begin{equation}\label{measureBi}
\mathbb{P}_i(B_i) \le \dfrac{1}{1-\delta_i} \cdot \dfrac{27}{256\,\delta_i^3} \sum_{x \in Q_{i-1}} \alpha_i^4(x)\,\mathbb{P}_{i-1}(x) = \dfrac{27}{256\,(1-\delta_i)\,\delta_i^3}  \,\mathbb{E}_{i-1}[\alpha_i^4].
\end{equation}

A bound for $\mathbb{E}_{i-1}[\alpha_i^4]$ given in \cite[Lemma 3.6]{bbmst} is
\begin{equation}\label{firstbound}
\mathbb{E}_{i-1}[\alpha_i^4]
\le \dfrac{1}{(p_i-1)^4}\sum_{\substack{m_j \mid \ell_{i-1} \\ 1\le j \le 4}}\dfrac{\nu(\lcm(m_1,m_2,m_3.m_{4}))}{\lcm(m_1,m_2,m_3,m_{4})},
\end{equation}
where $\nu(m)$ is the multiplicative function defined by
\[
\nu(m)=\prod_{p_j\mid m}\dfrac{1}{1-\delta_j}.
\]
We extend the right-hand side of \eqref{firstbound} by allowing
$m_{j}$ to range over all $p_{i-1}$ smooth numbers. 
We then collect terms arranging them by a fixed value for 
\[
m = \lcm(m_1,m_2,m_3,m_{4}).
\]
This produces the bound
\begin{equation}\label{prep}   
\mathbb{E}_{i-1}[\alpha_i^4] \le \dfrac{1}{(p_{i}-1)^4} \sum_{m \text{ is $p_{i-1}$-smooth}}\dfrac{\chi(m)\nu(m)}{m},    \end{equation}
where $\chi(m)$ counts the number of ways that $m$ can be written as the least common multiple of four positive integers. One checks that $\chi$ is multiplicative and, for $p$ a prime and $t$ a non-negative integer, we have
\[
\chi(p^t) = (t+1)^{4}-t^{4} = 4 t^{3} + 6 t^{2} + 4 t + 1.
\]
This allows us to write the right-hand side of \eqref{prep} as an Euler product, giving
\begin{equation}\label{secondbound}
\begin{split}
\mathbb{E}_{i-1}[\alpha_i^4] 
&\le \dfrac{1}{(p_i-1)^4} \,\prod_{j < i}\bigg(1+\dfrac{1}{1-\delta_j} \sum_{t=1}^{\infty} \dfrac{4 t^{3} + 6 t^{2} + 4 t + 1}{p_j^t}\bigg) \\
&= \dfrac{1}{(p_i-1)^4} \,\prod_{j < i}\bigg(1+\dfrac{15p_{j}^3 + 5p_{j}^2 + 5p_{j} - 1}{(1-\delta_j) (p_{j}-1)^{4}}\bigg),
\end{split}
\end{equation}
where the last equation follows from a direct computation of the sum in the previous line.
Combining \eqref{measureBi} and \eqref{secondbound}, we have
\begin{equation}\label{pibisecondmomentbd}
\mathbb{P}_i(B_i) \le \dfrac{27}{256\,(1-\delta_i)\,\delta_i^3\,(p_i-1)^4} \,\prod_{j < i}\bigg(1+\dfrac{15p_{j}^3 + 5p_{j}^2 + 5p_{j} - 1}{(1-\delta_j) (p_{j}-1)^{4}}\bigg).
\end{equation}
Observe that, for $i > r$, we can interpret $B_{i}$ as the empty set, and so $\mathbb{P}_i(B_i) = 0$ and \eqref{pibisecondmomentbd} still holds for any choice of $\delta_{i} \in (0,1/2]$ for $i > r$.

%%%%
\section{Proofs of Theorems~\ref{mainthmone} and \ref{mainthmoneptfive}}

In this section, we provide some lemmas and explain a proof of Theorem~\ref{mainthmone} based on the lemmas, putting off the details of proofs of the lemmas to subsequent sections.  

\begin{lemma}\label{tailestimate}
Let $N$ be a positive integer $\ge 10^{9}$.  
Let $\delta_{i} = 0$ for $1 \le i < N$ and 
\[
\delta_{i} = \dfrac{95007347}{1520117553} \qquad \text{for $i \ge N$}.  
\]
Then
\[
\sum_{i \ge N}\mathbb{P}_i(B_i) 
\le \dfrac{0.657743 \log^{16}\!p_{N}}{p_{N}^{3}}.
\]
\end{lemma}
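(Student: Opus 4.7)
The plan is to apply inequality \eqref{pibisecondmomentbd} with the specified choice of $\{\delta_i\}$, bound the resulting Euler product using an explicit Mertens-type inequality, and then estimate the tail sum over $i\ge N$ by partial summation.

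For $i\ge N$, inequality \eqref{pibisecondmomentbd} takes the form
\[
\mathbb{P}_i(B_i) \le \dfrac{27}{256(1-\delta)\delta^3(p_i-1)^4}\,A_N\,\prod_{N\le j<i}\bigg(1+\dfrac{15p_j^3+5p_j^2+5p_j-1}{(1-\delta)(p_j-1)^4}\bigg),
\]
where $\delta=95007347/1520117553$ and $A_N$ denotes the corresponding product over $j<N$ (with $\delta_j=0$). A short arithmetic check gives $15/(1-\delta)=16-1/1425110206$, so the relevant Mertens exponent is just below $16$. Each factor above equals $1+\alpha_j/p_j+O(1/p_j^{2})$ with $\alpha_j=15$ for $j<N$ and $\alpha_j<16$ for $j\ge N$. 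Using an explicit Rosser--Schoenfeld form of Mertens' third theorem, together with the exact evaluation of the factors for the first several small primes (where the crude bound $(1-1/p)^{-15}$ is grossly larger than the true factor), we obtain
\[
A_N\,\prod_{N\le j<i}\bigg(1+\dfrac{15p_j^3+5p_j^2+5p_j-1}{(1-\delta)(p_j-1)^4}\bigg) \le K(\log p_i)^{16}
\]
for an explicit constant $K$.

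Substituting this estimate and summing, the problem reduces to bounding the tail sum
\[
\sum_{p>p_N}\dfrac{(\log p)^{16}}{(p-1)^4}.
\]
By Abel summation using an explicit form of the prime number theorem, together with iterated integration by parts on $\int_{p_N}^\infty(\log t)^{15}/t^4\,dt$, this sum is of order $(\log p_N)^{15}/(3p_N^{3})$. Absorbing one further factor of $\log p_N$ (legitimate since $N\ge 10^9$ makes $\log p_N\ge 23$) produces a bound of the form $c\,(\log p_N)^{16}/p_N^{3}$.

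The main obstacle is purely numerical: tracking all the explicit constants through the Rosser--Schoenfeld error terms, the exact contribution of small primes to $A_N$, and the tail integration, so as to produce the specific value $0.657743$. The hypothesis $N\ge 10^9$ is what guarantees that the asymptotic error terms in the Mertens and PNT bounds are negligible and that the step of bounding $(\log p_N)^{15}$ by $(\log p_N)^{16}/\log p_N$ leaves enough slack for the stated constant.
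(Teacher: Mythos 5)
Your strategy is the same as the paper's: apply \eqref{pibisecondmomentbd}, bound the resulting Euler product by roughly $C\,\log^{16}p_i$ via explicit Rosser--Schoenfeld (Mertens-type) inequalities combined with exact evaluation of the small-prime factors, and estimate the tail sum by an integral of the form $\int (\log t)^{16}/t^4\,dt$. Two mechanical points are worth flagging. First, your identity $15/(1-\delta) = 16 - 1/1425110206$ gives the quantity the paper calls $\eta$, but the exponent that must be pinned just below $16$ after the Mertens step is $\eta' = \eta\,p'/(p'-1)$ with $p' = p_{10^9} = 22801763489$: the factor $p'/(p'-1)$ pushes $\eta'$ up to $15.99999999999999999406\ldots$, and the choice of $\delta$ is tuned precisely so that this stays below $16$ (the slack being only $p' - 16\cdot 1425110206 = 193$). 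Second, your tail estimate invokes Abel summation and the PNT (then discards the gained factor of $\log p_N$); the paper simply compares $\sum_{i\ge N}\log^{16}p_i/p_i^4$ to $\sum_{n\ge p_N}\log^{16}n/n^4$ and then to $\int_{p_N-1}^\infty (\log t)^{16}/t^4\,dt$, which it evaluates exactly via a degree-$16$ polynomial identity, avoiding prime-counting error terms altogether.

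The genuine gap is the one you acknowledge. The specific constant $0.657743$ \emph{is} the statement of the lemma, and producing it requires (i) a direct computer evaluation of
\[
M_0 = \prod_{1\le j < 10^9}\bigg(1 + \frac{15p_j^3+5p_j^2+5p_j-1}{(p_j-1)^4}\bigg) \le 3.611\cdot 10^{19},
\]
which the paper reports took several hours, (ii) explicit control of the Rosser--Schoenfeld error terms in the partial products over $10^9 \le j < i$ (where the paper also quietly replaces $\delta_j=0$ by $\delta_j=\delta'$ for $10^9\le j<N$, a legitimate upper bound, so that a single exponent $\eta'$ suffices), and (iii) explicit evaluation of the tail integral with a bound valid for all $N\ge 10^9$. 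You have identified the right machinery, but without carrying out these computations the lemma is not proven; the ``purely numerical'' obstacle you set aside is precisely where the work lies.
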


The idea behind a proof of Lemma~\ref{tailestimate}, provided in the next section, is that $\delta_{j}$ is chosen so that the product in \eqref{pibisecondmomentbd} is roughly
\[
\prod_{j < i}\bigg(1+\dfrac{16}{p_{j}}\bigg) 
\le \prod_{j < i}\bigg(1+\dfrac{1}{p_{j}}\bigg)^{16} 
\lessapprox \ \log^{16}\!p_{i}.
\]
Then for some constant $c_{1}$, one obtains from \eqref{pibisecondmomentbd} that
$\mathbb{P}_i(B_i) \le c_{1} (\log^{16}\!p_{i})/p_{i}^{4}$.  Summing over $i \ge N$ and working with explicit constants results in the lemma.  

For our next lemma, we note that the sum of the reciprocals of the positive integers which have the $i^{\rm th}$ prime $p_{i}$ as a factor and which are $p_{i}$-smooth is given by
\begin{equation}\label{monedef}
M_{1}(p_{i}) 
= \bigg(  \sum_{k=1}^{\infty} \dfrac{1}{p_{i}^{k}} \bigg)
\prod_{j=1}^{i-1} \bigg(  \sum_{k=0}^{\infty} \dfrac{1}{p_{j}^{k}}   \bigg)
= \dfrac{1}{p_{i}-1} \prod_{j=1}^{i-1} \bigg(  1 + \dfrac{1}{p_{j}-1}  \bigg).
\end{equation}

\begin{lemma}\label{m1bound}
If $M_{1}(p_{i})$ is as defined above and $N \ge 10^{9}$, then
\[
\prod_{i=3}^{N-1} \big(1-M_{1}(p_{i})\big) > 
\dfrac{3.84636486599}{p_{N}^{1.7826381}} \cdot \exp\big(\!-0.8913191/\log p_{N}\big).
\]
\end{lemma}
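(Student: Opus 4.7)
The proof rests on the observation that $\sum M_1(p_i)$ telescopes. Setting $P_i = \prod_{j=1}^{i} p_j/(p_j-1)$, formula \eqref{monedef} rewrites as $M_1(p_i) = P_{i-1}/(p_i-1)$, and a short calculation gives $P_i - P_{i-1} = P_{i-1}/(p_i-1) = M_1(p_i)$. Hence
\[
\sum_{i=3}^{N-1} M_1(p_i) = P_{N-1} - P_2 = P_{N-1} - 3.
\]
Since $M_1(p_3) = 3/4 < 1$ is the maximum value for $i \geq 3$ (the ratio $M_1(p_{i+1})/M_1(p_i) = p_i/(p_{i+1}-1)$ is less than $1$ for $i \geq 2$), each factor $1 - M_1(p_i)$ is positive, and the inequality $\sum_{k \geq 2} x^k/k \leq x^2/(2(1-x))$ valid for $x \in [0,1)$ yields
\[
-\log \prod_{i=3}^{N-1}(1 - M_1(p_i)) \leq (P_{N-1} - 3) + \frac{1}{2}\sum_{i=3}^{N-1}\frac{M_1(p_i)^2}{1 - M_1(p_i)}.
\]

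The lemma then reduces to an explicit Mertens bound plus a convergent tail sum. For the first term, I would invoke the Rosser--Schoenfeld form of Mertens' third theorem,
\[
P_{N-1} < e^\gamma \log p_{N-1}\bigg(1 + \frac{1}{2\log^2 p_{N-1}}\bigg),
\]
valid for $p_{N-1}$ beyond a small explicit threshold. Since $N \geq 10^9$, we have $\log p_{N-1} \geq \log p_{10^9 - 1}$, so the Rosser--Schoenfeld coefficient $e^\gamma(1 + 1/(2\log^2 p_{N-1}))$ is at most $e^\gamma(1 + 1/(2\log^2 p_{10^9 - 1})) \approx 1.7826381$---this is precisely where the stated leading constant comes from, and the discrepancy between $\log p_{N-1}$ and $\log p_N$ contributes only an $O(1/\log p_N)$ error. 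For the tail sum $T = \frac{1}{2}\sum_{i \geq 3} M_1(p_i)^2/(1-M_1(p_i))$, I would compute the first several dozen terms directly (the early values $M_1(5)=3/4$, $M_1(7)=5/8, \ldots$ dominate) and bound the remainder via $M_1(p_i) \leq e^\gamma(\log p_i)/(p_i-1)$ together with explicit upper bounds for $\sum_{p > P}(\log p)^2/p^2$. The value of $T$ should then satisfy $3 - T \leq \log 3.84636486599$, producing the stated constant in the numerator.

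The main obstacle is purely quantitative. One must track each error term---the Rosser--Schoenfeld correction, the $\log p_{N-1}$ to $\log p_N$ transition, and the residual higher-order contributions $\frac{1}{k}\sum M_1(p_i)^k$ for $k \geq 3$ hidden inside the $x^2/(2(1-x))$ envelope---with enough precision that they collectively fit into the $0.8913191/\log p_N$ slack at $N = 10^9$, and the explicit tail sum $T$ must be computed to enough digits that the bound $3 - T \leq \log 3.84636486599$ truly holds. These are finite, explicit but delicate calculations, with no structural ingredient beyond the telescoping identity and the sharp Rosser--Schoenfeld estimate.
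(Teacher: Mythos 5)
Your telescoping observation is correct and genuinely not used in the paper: since $M_1(p_i) = P_{i-1}/(p_i - 1) = P_i - P_{i-1}$, the first-order sum $\sum_{i=3}^{N-1} M_1(p_i) = P_{N-1} - 3$ is exact, which cleanly separates the Mertens-theorem contribution from a bounded correction. The paper instead computes the full product $\prod_{i=3}^{10^9 - 1}(1 - M_1(p_i))$ by a brute-force $\sim 10^9$-term iteration (arriving at $> 1.319884728 \cdot 10^{-18}$), and then estimates the remaining factors $\prod_{i=10^9}^{N-1}$ by bounding $M_1(p_i)$ with Rosser--Schoenfeld and $\sum \log p_i/p_i$ with \cite[(3.22)]{rossch}. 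Your decomposition would, if it closed, replace the massive direct product computation with evaluation of a rapidly convergent series. That is a real structural difference.

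However, there is a genuine quantitative gap in the version you wrote. You define
\[
T = \tfrac12 \sum_{i \ge 3} \frac{M_1(p_i)^2}{1 - M_1(p_i)}.
\]
The bound $\sum_{k\ge 2} x^k/k \le x^2/(2(1-x))$ is extremely lossy when $x$ is not small: at $i=3,4,5$ one has $M_1 = 3/4, 5/8, 7/16$, and the three corresponding terms of your $T$ already contribute $1.125 + 0.521 + 0.170 \approx 1.82$, with the running total exceeding $2.1$ by $i = 10$. To reach the stated constant you would need (after accounting for the favorable gap between the exponent $e^\gamma = 1.78107\ldots$ and $1.7826381$) an upper bound of about $1.69$ on the accumulated correction, so as written the estimate fails outright. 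What you actually need is the exact correction $\sum_{i\ge 3}(-\log(1-M_1(p_i)) - M_1(p_i))$; the first three terms of that are $0.636 + 0.356 + 0.138$, which is in a workable range. You half-acknowledge this when you say you would ``compute the first several dozen terms directly,'' but the $x^2/(2(1-x))$ envelope cannot be part of the definition of $T$ for those dominant early terms. Also, the asserted inequality ``$3 - T \le \log 3.84636486599$'' is reversed: you need $\exp(3 - T) \ge 3.84636\ldots$, i.e., $3 - T \ge \log 3.84636486599$, hence $T \lesssim 1.653$; my rough estimate of the exact correction series lands at roughly $1.69$, which only closes the gap because $p_{N-1}^{-e^\gamma}$ exceeds $p_N^{-1.7826381}$ by a factor $\approx 1.038$ at $N = 10^9$. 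That bookkeeping is precisely the delicate part and it is not in the proposal. So: elegant idea, different route, but the numerical step as stated fails and would need both the sharper exact correction and careful tracking of the exponent/error split to actually reach the lemma's constant.
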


We could have replaced $\exp\big(\!-0.8913191/\log p_{N}\big)$ above with the value of 
\[
\exp\big(\!-0.8913191/\log p_{10^9}\big) = 0.963317996\ldots;
\] 
the decision not to is based on the fact that we will want $N$ considerably larger than $10^{9}$ when applying Lemma~\ref{m1bound}.  
The details of a proof of this lemma are in Section~5. %%% Note 5 is currently the right section. %%%
The rough idea is that one expects the product in the definition of $M_{1}(p_{i})$ to be around $e^{\gamma} \log p_{i-1}$, where $\gamma = 0.5772156649\ldots$ is Euler's constant.  The product in Lemma~\ref{m1bound} can therefore be approximated by looking at its logarithm which is around
\[
- \sum_{i=3}^{N-1} \dfrac{e^{\gamma} \log p_{i-1}}{p_{i}-1} 
> c_{2} - \sum_{i=3}^{N-1} \dfrac{e^{\gamma} \log p_{i}}{p_{i}}
> c_{3} - e^{\gamma} \log p_{N-1}
\]
for some constants $c_{2}$ and $c_{3}$.  Note that $e^{\gamma} = 1.7810724\ldots$.  
One can expect to get an exponent on $p_{N}$ in Lemma~\ref{m1bound} as close to $1.7810724\ldots$ as 
one wants provided $N$ is sufficiently large.  Taking some care with the constants involved and
the lower bound of $10^{9}$ on $N$ allows us to obtain Lemma~\ref{m1bound}.

As a consequence of the above two lemmas, we have the following lemma, with a proof given in Section~6. %%% Note 6 is currently the right section. %%%.

\begin{lemma}\label{differencelem}
If the $\delta_{i}$ are as in Lemma~\ref{tailestimate}, then
\begin{equation}
\label{differencelemineq}
\begin{split}
\Delta \, \prod_{j\geq 3}^{N-1}(1-&M_1(p_j)) - \sum_{i=N}^\infty   \mathbb{P}_i(B_i) \\[5pt]
&> 
\begin{cases}
4.7596769 \cdot 10^{-50} &\text{for } \Delta = 1/12 \text{ and } N = 1.5320302 \cdot 10^{21}, \\[5pt] 
5.9329 \cdot 10^{-42} \cdot \Delta^{3} &\text{for } \Delta \in (0,1/12) \text{ and } N = \big\lceil 2.8 \cdot 10^{20}/\Delta \big\rceil. 
\end{cases}
\end{split}
\end{equation}
\end{lemma}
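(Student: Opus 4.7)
The plan is to substitute the bounds of Lemma~\ref{tailestimate} and Lemma~\ref{m1bound} directly into the left side of \eqref{differencelemineq}, reducing the problem to showing that
\[
F(N,\Delta) \;:=\; \dfrac{3.84636486599\,\Delta\,\exp\!\big(\!-0.8913191/\log p_N\big)}{p_N^{1.7826381}} \;-\; \dfrac{0.657743\,\log^{16} p_N}{p_N^{3}}
\]
exceeds the stated lower bound for each of the two prescribed $N$. Both prescribed values of $N$ lie well above the threshold $10^9$ required in Lemmas~\ref{tailestimate} and~\ref{m1bound}, and the choice of $\delta_i$ assumed there is inherited.

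To handle $p_N$ explicitly I would invoke Rosser--Schoenfeld / Dusart-type two-sided bounds such as $N(\log N + \log\log N - 1) < p_N < N(\log N + \log\log N - 0.9484)$, valid for $N \ge 39017$ and hence trivially for $N \ge 10^{20}$; these give narrow numerical intervals for $p_N$ and $\log p_N$ that can be substituted throughout. Since the first term of $F$ is monotonically decreasing in $p_N$ and the second is also eventually decreasing in $p_N$, one uses the upper endpoint of the interval for $p_N$ in the first term and the lower endpoint in the second, producing a rigorous lower bound on $F(N,\Delta)$.

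In the first case, $\Delta = 1/12$ and $N = 1.5320302 \cdot 10^{21}$, the value of $N$ is essentially the numerical maximizer of $F(\cdot, 1/12)$: both terms are individually of order $10^{-42}$, and the claim that their difference exceeds $4.7596769\cdot 10^{-50}$ is a tight but purely mechanical interval computation. I expect this delicate near-cancellation to be the main obstacle, as even modest rounding errors in $p_N$ or $\log p_N$ could destroy the positivity; I would therefore carry out the final numerical step in interval arithmetic, and separately verify (by sign-checking $\partial_N F$ at nearby integers) that this $N$ really sits close to the optimum so that nothing better is being missed.

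In the second case, with $N = \lceil 2.8 \cdot 10^{20}/\Delta \rceil$ and $\Delta \in (0, 1/12)$, I would use $p_N \asymp N\log N$ with $N \asymp 1/\Delta$ to express the first term of $F$ as a quantity of order $\Delta^{2.7826381}/(\log N)^{1.7826381}$ and the second as a quantity of order $\Delta^{3}(\log N)^{13}$. Since $\log N \le 47.1 + \log(1/\Delta)$ and the factor $\Delta^{-0.2174}$ dominates any power of $\log(1/\Delta)$ on $(0,1/12]$, the first term dominates uniformly. One then writes $F(N,\Delta) \ge \Delta^3\,g(\Delta)$ for an explicit continuous positive function $g$ on $(0,1/12]$, and reduces the claim to the one-variable inequality $g(\Delta) \ge 5.9329\cdot 10^{-42}$, which can be verified by endpoint analysis at $\Delta \to 0^{+}$ and $\Delta = 1/12$ combined with a monotonicity check. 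The constant $2.8\cdot 10^{20}$ appearing in the definition of $N$ is calibrated precisely so as to produce this coefficient.
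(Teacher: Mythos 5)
Your overall framework matches the paper's: substitute the bounds from Lemmas~\ref{tailestimate} and~\ref{m1bound} to reduce to bounding the explicit function $F(N,\Delta)$, then sandwich $p_N$ between Rosser--Schoenfeld-type expressions $\tau_1 < p_N < \tau_2$ and exploit monotonicity of each term before a numerical evaluation. The paper does the same thing, though it first factors $F$ as $\frac{0.657743\log^{16}p_N}{p_N^3}\big(f(p_N)-1\big)$ with $f$ increasing and the prefactor decreasing, so that the lower bound becomes $\frac{0.657743\log^{16}\tau_2}{\tau_2^3}\big(f(\tau_1)-1\big)$; your slightly different split (upper endpoint in the positive term, lower endpoint in the negative term) is also a valid lower bound.

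Two parts of your plan are off, however. First, your assertion that $N=1.5320302\cdot 10^{21}$ is ``essentially the numerical maximizer of $F(\cdot,1/12)$'' is false: the paper explicitly remarks that this $N$ is chosen to be the \emph{smallest} value for which the argument gives positivity (since a smaller $N$ yields a smaller $K$, and hence a better Theorem~\ref{mainthmone}), and that this choice does \emph{not} maximize the lower bound. Second, and more substantively, your stated plan for $\Delta\in(0,1/12)$ — write $F\ge\Delta^3 g(\Delta)$, then verify $g\ge 5.9329\cdot 10^{-42}$ ``by endpoint analysis at $\Delta\to 0^+$ and $\Delta=1/12$ combined with a monotonicity check'' — would fail as stated, because the relevant function is not monotone on $(0,1/12)$. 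The key quantity the paper bounds is (up to constants) $x^{0.2173619}/\log^{14.7826381}(2.8\cdot 10^{20}\,x)$ with $x=1/\Delta$; this has an interior minimum at $x\approx 1.2272\cdot 10^{9}$, i.e.\ at $\Delta\approx 8\cdot 10^{-10}$, so the binding constraint occurs strictly inside the interval. You would need to locate and evaluate that interior minimum, not merely check the endpoints, to complete the argument.
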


The choice of $N = 1.5320302 \cdot 10^{21}$ for $\Delta = 1/12$ above minimizes the value of $N$ for which our arguments show that the left-hand side of \eqref{differencelemineq} is positive.  In the context of Theorem~\ref{mainthmone}, it is more important for us to minimize the choice of $K$ in our next lemma.  However, for the arguments we use to obtain $K$, minimizing $K$ corresponds to choosing $N$ as small as possible in Lemma~\ref{differencelem}.  We note that this choice of $N$ for Lemma~\ref{differencelem} does \textit{not} correspond to maximizing the lower bound which our methods give for the left-hand side of \eqref{differencelemineq}.  

\begin{lemma}\label{smoothbd}
For $\Delta \in (0,1/12]$, let $N$ be as in Lemma~\ref{differencelem}.  
Let $S$ denote the set of $p_{N}$-smooth positive integers.  
Let 
\[
K = 
\begin{cases}
\exp\big( 1.681527 \cdot 10^{21}\big) &\text{if } \Delta = 1/12, \\[5pt]
\exp\big((2.923 \cdot 10^{20} - 6.21 \cdot 10^{18} \cdot \log \Delta)/\Delta \big) &\text{if } 0 < \Delta < 1/12.
\end{cases}
\]
Then
\[
\sum_{\substack{m \in S \\ m > K}} \dfrac{1}{m} < 
\begin{cases}
10^{-10^{13}} &\text{if } \Delta = 1/12, \\[5pt]
\exp\big((-4.77 \cdot 10^{15} + 1.02 \cdot 10^{14} \cdot \log \Delta)/\Delta\big)  &\text{if } 0 < \Delta < 1/12.
\end{cases}
\]
\end{lemma}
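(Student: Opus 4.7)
The plan is to apply Rankin's method to the tail sum of reciprocals of $p_{N}$-smooth numbers. For any parameter $\alpha \in (0,1)$, the elementary inequality $1/m \le m^{\alpha-1} K^{-\alpha}$ (valid for $m > K$), combined with the Euler product for the Dirichlet series of $p_{N}$-smooth integers, gives
\[
\sum_{\substack{m \in S \\ m > K}} \dfrac{1}{m}
\le K^{-\alpha} \sum_{m \in S} m^{\alpha-1}
= K^{-\alpha} \prod_{i=1}^{N} \bigl(1 - p_{i}^{-(1-\alpha)}\bigr)^{-1}.
\]
After taking logarithms, the lemma reduces to exhibiting an $\alpha \in (0,1)$ for which
\[
\alpha \log K \;\ge\; \bigl|\log(\text{target})\bigr| + \sum_{i=1}^{N} -\log\bigl(1 - p_{i}^{-(1-\alpha)}\bigr).
\]

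The key observation is that $\log K$ is overwhelmingly larger than $|\log(\text{target})|$: a direct inspection of the lemma's data shows that the ratio $|\log(\text{target})|/\log K$ is about $1.37 \cdot 10^{-8}$ in the $\Delta = 1/12$ case and never exceeds about $1.64 \cdot 10^{-5}$ in the $\Delta \in (0,1/12)$ case. I would therefore take $\alpha$ of this same tiny order. With such $\alpha$ the exponent $1 - \alpha$ is extremely close to $1$, so each factor $\bigl(1 - p_{i}^{-(1-\alpha)}\bigr)^{-1}$ is very close to $(1 - 1/p_{i})^{-1}$, and the sum on the right reduces, up to negligible error, to $\log \prod_{p \le p_{N}}(1 - 1/p)^{-1}$. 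I would bound the latter explicitly via the Rosser-Schoenfeld inequality
$\prod_{p \le x}(1 - 1/p)^{-1} < e^{\gamma} \log x\,\bigl(1 + 1/(2\log^{2}x)\bigr)$,
which yields an upper bound of $\log\log p_{N} + \gamma + o(1)$. The correction coming from the shift $s = 1 \mapsto s = 1 - \alpha$ is controlled by $p^{-(1-\alpha)} - p^{-1} = p^{-1}\bigl(p^{\alpha} - 1\bigr) \le (\alpha \log p)/p \cdot \bigl(1 + O(\alpha \log p)\bigr)$, so contributes at most $\alpha \sum_{p \le p_{N}} \log p/(p-1) + O\bigl(\alpha^{2}\log^{2}p_{N}\bigr) = O(\alpha \log p_{N})$, using the Mertens estimate $\sum_{p \le x}\log p/(p-1) = \log x + O(1)$.

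Putting everything together, I would set
\[
\alpha \;=\; \Bigl(\,\bigl|\log(\text{target})\bigr| + \log\log p_{N} + \gamma + \text{buffer}\,\Bigr)\big/\log K,
\]
verify $\alpha \in (0,1)$ (immediate, since the numerator is dwarfed by $\log K$), and conclude. For $\Delta = 1/12$, plugging in the values of $N$, $\log K$, and an estimate for $p_{N}$ produces the explicit $10^{-10^{13}}$ bound; for $\Delta \in (0,1/12)$, feeding the $\Delta$-dependent formulas for $\log K$ and the target through the same computation yields the claimed expression. The main obstacle will be the explicit constant-chasing: one must confirm uniformly over $\Delta \in (0,1/12]$ that the Rosser-Schoenfeld remainder and the $O(\alpha \log p_{N})$ shift remain dominated by the buffer between $\alpha \log K$ and $|\log(\text{target})|$, including for extremely small $\Delta$ where $p_{N}$ (hence $\log\log p_{N}$) grows without bound. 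This is conceptually straightforward but requires some numerical care.
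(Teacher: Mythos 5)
Your proposal uses Rankin's method and takes a genuinely different route from the paper. The paper's proof first observes that every $p_N$-smooth $m > K$ factors as $uv$ with $u$ a $p_N$-smooth number in $(K, p_N K]$ and $v$ again $p_N$-smooth, giving
\[
\sum_{\substack{m\in S\\m>K}}\frac{1}{m}\le\frac{\Psi(p_N K, p_N)}{K}\prod_{p\le p_N}\Big(1+\frac{1}{p-1}\Big),
\]
and then bounds $\Psi(p_N K, p_N)$ via Granville's lattice-point estimate $\frac{1}{N!}\prod_{p\le p_N}(\log X)/\log p$, which in turn requires the auxiliary Lemma~\ref{logloglemma} ($\sum_{p\le p_n}\log\log p\ge n\log\log n$) to control $\prod_{p\le p_N}\log p$. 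Your Rankin route dispenses with the $\Psi$-estimate and Lemma~\ref{logloglemma} altogether, using only the Euler product for $\sum_{m\in S}m^{-(1-\alpha)}$ and Mertens/Rosser--Schoenfeld bounds; conceptually that is a simplification. Two remarks. First, your $\alpha\approx|\log(\text{target})|/\log K$ is far from the Rankin-optimal exponent (which here sits near $\alpha=1$ and would produce a number astronomically smaller than $10^{-10^{13}}$), but it suffices, and keeping $\alpha$ tiny conveniently pins the Euler product near $s=1$. Second, a small correction to your heuristic: the shift estimate $O(\alpha\log p_N)$ is a first-order bound valid only when $\alpha\log p_N$ is small; since $\log p_N\approx 47+|\log\Delta|$ and $\alpha\approx 1.64\cdot 10^{-5}$, this holds for all $\Delta$ down to roughly $e^{-6000}$, but for truly tiny $\Delta$ you should instead use the crude uniform bound $\sum_p-\log\big(1-p^{-(1-\alpha)}\big)\lesssim p_N^{\alpha}\big(\log\log p_N+O(1)\big)$, which grows like $e^{\alpha\log p_N}$ but remains completely dwarfed by the budget $\alpha\log K-|\log(\text{target})|\gtrsim |\log\Delta|/\Delta$. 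With that adjustment and the explicit numerics you defer, the approach is sound.
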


Observe that $\sum_{m \in S} (1/m)$ converges, and more precisely
\[
\sum_{m \in S} \dfrac{1}{m} = \prod_{p \le p_{N}} \bigg(  1+\dfrac{1}{p-1}  \bigg) 
= \prod_{p \le p_{N}} \dfrac{p}{p-1}.
\]
Thus, we know the tail end of this sum can be arbitrarily small.  
The lemma is simply giving some indication of a tail end which (as can be checked) is smaller than
the constants appearing on the right-hand side of \eqref{differencelemineq}.  
We give a proof of Lemma~\ref{smoothbd} in Section~7. %%% Note 7 is currently the right section. %%%

We are now ready to provide proofs of Theorems~\ref{mainthmone} and \ref{mainthmoneptfive}, assuming the lemmas above.  
We take $N$ as in Lemma~\ref{smoothbd}, viewing the primes $p \le p_{N}$ as a subset of the primes that make up the prime factorization of the moduli in a covering system under consideration.  As in our discussion of the distortion method in Section~2, we are interested in the case where the least common multiple of the moduli takes the form $p_{1}^{\gamma_{1}} p_{2}^{\gamma_{2}} \cdots p_{r}^{\gamma_{r}}$, where the $\gamma_{i}$ are non-negative integers and, hence, possibly $0$.  Thus, we are considering $r \ge N$.

With $N$ fixed as above, we also fix $\Delta$ and $K$ as in Lemma~\ref{smoothbd}, and set
\[
\varepsilon = \dfrac{1}{K^{2}} = 
\begin{cases}
\exp\big(\!-3.363054 \cdot 10^{21} \,\big) &\text{if } \Delta = 1/12, \\[5pt]
\exp\big(\!-(5.846 \cdot 10^{20} - 1.242 \cdot 10^{19} \cdot \log \Delta)/\Delta \big) &\text{if } 0 < \Delta < 1/12.
\end{cases}
\]
Observe that with this choice of  $\varepsilon$, it suffices to show that the sum of the reciprocals of the moduli in a covering system as in Theorem~\ref{mainthmone} or Theorem~\ref{mainthmoneptfive} exceeds $1+\varepsilon$.   
The role of $\varepsilon$ in the proof is as follows.  Suppose there are two congruences with moduli $m_{1}$ and $m_{2}$ in a finite distinct covering system $\mathcal C$ with each $m_{j} \le K$.  If there is an integer that satisfies both of the congruences, then the density of integers satisfying both congruences is at least
\[
\dfrac{1}{\lcm(m_{1},m_{2})} > \dfrac{1}{K^{2}} = \varepsilon.
\]
Let $\mathcal M$ denote the complete set of moduli appearing in $\mathcal C$. 
As $\mathcal C$ is a covering system and the density of integers covered by a congruence with modulus $m$ equals $1/m$, we deduce from the above that
\[
\sum_{m \in \mathcal M} \dfrac{1}{m} - \varepsilon > 1.
\]
Thus, to establish Theorem~\ref{mainthmone} or Theorem~\ref{mainthmoneptfive} for $\mathcal C$, it suffices to show that there must be integers satisfying some two different congruence classes in $\mathcal C$ with moduli $\le K$.  With that in mind, we assume there are no integers satisfying simultaneously two congruences in $\mathcal C$ having moduli $\le K$, and we now aim to obtain a contradiction.  

For $1 \le i \le N$, let $\mathcal M_{i}$ be the set of moduli $m$ appearing in congruences in $\mathcal{C}$ with $m \le K$ and for which $m$ is $p_i$-smooth and $p_i\mid m$.   Let 
$\mathcal B_i$ be the set of positive integers covered by those congruences in $\mathcal{C}$ having a modulus in $\mathcal M_{i}$. 
Let $\mathcal B_{0}$ denote the set of positive integers covered by a congruence in $\mathcal{C}$ with modulus $m > K$ and for which $m$ is $p_{N}$-smooth.
The notation $\mathcal U_i$ will be used to denote the set of positive integers left uncovered by those  congruences in $\mathcal{C}$ that have a modulus $m \le K$ with $m$ $p_i$-smooth.

We use $d(S)$ to denote the natural (or asymptotic) density of a set $S \subseteq \mathbb Z^{+}$, so
\[
d(S) = \lim_{x \rightarrow \infty} \dfrac{|\{ n \in [1,x] \cap \mathbb Z^{+} : n \in S  \}|}{x},
\]  
provided the limit exists.  
We make use of the following elementary properties of this density.

\begin{enumerate}
\item[(i)] If $S$ is the set of integers covered by $\mathcal C = \{a_i \pmod{m_i} : 1 \le i \le r \}$, then $d(S)$ exists and
\[
d(S) \le \sum_{i=1}^{r} \frac{1}{m_i}. 
\]
\item[(ii)]  Let $\mathcal C_1$ and $\mathcal C_2$ be sets of congruences classes.  
If $S_{1}$ and $S_{2}$ are the sets of integers covered by $\mathcal C_1$ and $\mathcal C_2$, respectively, then
\[
d(S_1\cup S_2)=d(S_1)+d(S_2)-d(S_1\cap S_2).
\]
\end{enumerate}
\noindent
We also note that Lemma~\ref{smoothbd} and (i) imply
\begin{equation}\label{densitybzerobd}
d(\mathcal B_{0}) < 
\begin{cases}
10^{-10^{13}} &\text{if } \Delta = 1/12, \\[5pt]
\exp\big((-4.77 \cdot 10^{15} + 1.02 \cdot 10^{14} \cdot \log \Delta)/\Delta\big)  &\text{if } 0 < \Delta < 1/12.
\end{cases}
\end{equation}

The value of $d(\mathcal U_{2})$, which from \eqref{deltadefinition} is $\Delta$, will play a crucial role in our arguments.   As we saw in the introduction, if the minimum modulus for congruences in $\mathcal C$ is $m_{0} \ge 5$, then $\Delta \ge 1/12$.  
We note here that the lower bound of $1/12$ can only be achieved if $m_{0} = 6$, and our methods below will allow for some improvement in the case $\Delta > 1/12$.  

Our immediate goal is to establish the inequality
\begin{equation}
\label{induct}
d(\mathcal U_{i+1})\ge d(\mathcal U_{i})\cdot \big(1-M_1(p_{i+1})\big),
\end{equation}
where we recall $M_1(p_{i+1})$ is defined by \eqref{monedef}.  
Recalling our assumption that there are no integers satisfying simultaneously two congruences in $\mathcal C$ having moduli $\le K$, we see that
\[
d(\mathcal U_{i+1})=d(\mathcal U_{i})-d(\mathcal B_{i+1}).
\]
Therefore, to establish \eqref{induct}, it suffices to show
\begin{equation}\label{moments}  
d(\mathcal B_{i+1}) \le d(\mathcal U_{i}) \cdot M_1(p_{i+1}).   
\end{equation}

To establish \eqref{moments}, we make use of an argument of Lewis \cite{lewis} with regard to exact coverings.  The assumption that there are no integers satisfying simultaneously two congruences in $\mathcal C$ having moduli $\le K$ allows us to modify the argument of Lewis by using it for only these moduli.  

Recall that $\mathcal B_{i+1}$ is the set of positive integers covered by those congruences in $\mathcal{C}$ having a modulus in $\mathcal M_{i+1}$. 
If $\mathcal M_{i+1} = \emptyset$, then the left-hand side of \eqref{moments} is $0$ so that \eqref{moments} holds.  We suppose now that $\mathcal M_{i+1}$ is not the empty set.
Call $m \in \mathcal M_{i+1}$ \textit{division minimal} if every element of $\mathcal M_{i+1}\backslash \{ m \}$ does not divide $m$.  Note that there is necessarily some division minimal modulus in $\mathcal M_{i+1}$ since the least integer in $\mathcal M_{i+1}$ is division minimal.  Letting $m_{1}, \ldots, m_{k}$ denote the division minimal moduli in $\mathcal M_{i+1}$.  
We define
\[
D_{i+1}=\{m_1',\dots,m_k'\}, \quad \text{where $m_j'=m_j/p_{i+1}$}.
\]
The definition of the $m_{j}$ implies every $m \in \mathcal M_{i+1}$ can be written as  
$m = m_{j} \ell = m_{j}'  p_{i+1} \ell$, for some $1 \le j \le k$ and some $\ell$ which is $p_{i+1}$-smooth.
For $J \subseteq \{ 1, 2, \ldots, k \}$, let $L(J)$ denote the least common multiple of the $m'_{j}$ with $j \in J$.  
For a fixed $J \subseteq \{ 1, 2, \ldots, k \}$, the set of positive integers which are divisible by $m_{j} = m_{j}'  p_{i+1}$ for all $j \in J$ and $p_{i+1}$-smooth is equal to the set of positive integer multiples of $L(J) \cdot p_{i+1}$ which are $p_{i+1}$-smooth.    
As the moduli in $\mathcal M_{i+1}$ are at most $K$, we deduce from our assumption and the principle of inclusion-exclusion that
\begin{equation}\label{multexp}
\begin{split}
d(\mathcal B_{i+1}) &= \sum_{m \in \mathcal M_{i+1}} \dfrac{1}{m}
\le  \sum_{\substack{J \subseteq \{ 1, 2, \ldots, k \} \\ J \ne \emptyset}} 
\,\sum_{\ell \text{ is $p_{i+1}$-smooth}} \dfrac{(-1)^{|J|+1}}{L(J) p_{i+1} \ell} \\
&
%% =  \sum_{\substack{J \subseteq \{ 1, 2, \ldots, k \} \\ J \ne \emptyset}} \dfrac{(-1)^{|J|+1}}{L(J) p_{i+1}} \prod_{p \le p_{i+1}} \bigg( \dfrac{p}{p-1} \bigg)
=  \Bigg( \sum_{\substack{J \subseteq \{ 1, 2, \ldots, k \} \\ J \ne \emptyset}} 
\dfrac{(-1)^{|J|+1}}{L(J)} \Bigg) M_{1}(p_{i+1}).
\end{split}
\end{equation}

To establish \eqref{moments} and, hence, \eqref{induct}, we show that the last sum appearing in \eqref{multexp} is bounded above by $d(\mathcal U_{i})$.
For each $m_j'=m_j/p_{i+1}\in D_{i+1}$, consider the congruence class 
$a_j \pmod{m_{j}'}$ derived from the congruence class $a_j \pmod{m_j}$ in $\mathcal C$. 
Let $\mathcal B'_{i+1}$ denote the set of positive integers which are in a congruence classes $a_j \pmod{m_{j}'}$ for some $m_j' \in D_{i+1}$.
We claim that $\mathcal B'_{i+1} \subseteq \mathcal U_{i}$.  
Let $n \in \mathcal B'_{i+1}$, so $n \equiv a_j \pmod{m_{j}'}$ for some $m_j' \in D_{i+1}$.  
Assume $n \not\in \mathcal U_{i}$.  
Then there is a congruence class $a \pmod{m}$ in $\mathcal C$ with modulus $m \le K$ such that $m$ is $p_{i}$-smooth and $n \equiv a \pmod{m}$.
Fix $e \in \mathbb Z^{+}$ such that $p_{i+1}^{e} \exactmid m_{j}$ and, hence, $p_{i+1}^{e-1} \exactmid m'_{j}$.  
Then the integers $n + t m'_{j} m$ with $t \in \{ 1, 2, \ldots, p_{i+1} \}$ are all $a_{j}$ modulo $p_{i+1}^{e-1}$ and incongruent modulo $p_{i+1}^{e}$.  Therefore, one of the integers $n + t m'_{j} m$ with $t \in \{ 1, 2, \ldots, p_{i+1} \}$ is congruent to $a_{j}$ modulo $p_{i+1}^{e}$.  
Since $m_{j} = m'_{j} p_{i+1}$, we see that such an integer is necessarily in both the congruence classes $a_{j} \pmod{m_{j}}$ and $a \pmod{m}$ in $\mathcal C$.  This contradicts that there are no integers satisfying simultaneously two congruences in $\mathcal C$ having moduli $\le K$.  We deduce then that $\mathcal B'_{i+1} \subseteq \mathcal U_{i}$ as claimed.

As noted in \cite{lewis}, a result of C.~A.~Rogers given in \cite[p.~242]{halbroth} implies that the density of integers covered by at least one of the congruences $x \equiv a_j \pmod{m_{j}'}$ with $m_j' \in D_{i+1}$ is at least as large as the density of integers covered by the congruences $x \equiv 0 \pmod{m_{j}'}$ with $m_j' \in D_{i+1}$.  By the principle of inclusion-exclusion, we deduce that
\[
d(\mathcal B'_{i+1}) \ge \sum_{\substack{J \subseteq \{ 1, 2, \ldots, k \} \\ J \ne \emptyset}} 
\dfrac{(-1)^{|J|+1}}{L(J)}.
\]
As $\mathcal B'_{i+1} \subseteq \mathcal U_{i}$, we see that the last sum appearing in \eqref{multexp} is bounded above by $d(\mathcal U_{i})$ as claimed, implying that \eqref{moments} and, hence, \eqref{induct} hold.

Recalling $\Delta = d(\mathcal U_{2})$, we deduce from \eqref{induct} that
\begin{equation}
\label{theorem1and2proofeq}
d(\mathcal U_{N-1}) \ge d(\mathcal U_{2}) \prod_{i=3}^{N-1} \big( 1-M_{1}(p_{i}) \big)
= \Delta \cdot \prod_{i=3}^{N-1} \big( 1-M_{1}(p_{i}) \big).
\end{equation}
For Theorem~1, where the minimum modulus is at least $5$, we have $\Delta \ge 1/12$, so
\begin{equation}\label{theunminus1densitybd}
d(\mathcal U_{N-1}) \ge \dfrac{1}{12} \prod_{i=3}^{N-1} \big( 1-M_{1}(p_{i}) \big).
\end{equation}
We set $\delta_{i}$ to be as in Lemma~\ref{tailestimate}. 
The significance to the distortion method of looking at the natural densities above is that with $\delta_{i} = 0$ for $1 \le i < N$, the probability distributions $\mathbb{P}_i$ behave uniformly as discussed in Section~\ref{sectiontwo}.  Furthermore, our sets $B_{i}$ defined in Section~\ref{sectiontwo} are associated with the sets $\mathcal B_{i}$, though the latter restricts to $p_{i}$-smooth moduli which are $\le K$.  
With $\mathbb{P}_{N-1}$ behaving like a natural density, we conclude
\[
\mathbb{P}_{N-1}(B_{1} \cup B_{2} \cup \cdots \cup B_{N-1}) 
\le d(\mathcal B_{0}) + \sum_{1 \le i < N} d(\mathcal B_{i}),
\]
where the inequality rather than equality occurs because $\mathcal B_{0}$ includes integers covered by congruences in $\mathcal C$ with moduli that are $p_{N}$-smooth numbers instead of $p_{N-1}$-smooth numbers and because the possibility exists that integers in $\mathcal B_{0}$ are also in some $\mathcal B_{i}$ with $1 \le i < N$.  On the other hand, the sets $\mathcal B_{i}$, with $1 \le i < N$, are disjoint since, by assumption, no integer can satisfy simultaneously two congruences in $\mathcal C$ having modulus $\le K$.  As a consequence of \eqref{theunminus1densitybd}, we see that
\[
\sum_{1 \le i < N} d(\mathcal B_{i}) = 1 - d(\mathcal U_{N-1})
\le 1 - \dfrac{1}{12} \prod_{i=3}^{N-1} \big( 1-M_{1}(p_{i}) \big).
\]
Thus, by \eqref{densitybzerobd}, we obtain
\[
\mathbb{P}_{N-1}(B_{1} \cup B_{2} \cup \cdots \cup B_{N-1}) 
\le 1 + 10^{-10^{13}} - \dfrac{1}{12} \prod_{i=3}^{N-1} \big( 1-M_{1}(p_{i}) \big).
\]
On the other hand, since $\mathcal C$ covers all the integers, we must have
\[
\mathbb{P}_{N-1}(B_{1} \cup B_{2} \cup \cdots \cup B_{N-1}) + \sum_{i=N}^\infty   \mathbb{P}_i(B_i) \ge 1.
\]
We deduce then that
\[
\dfrac{1}{12} \prod_{i=3}^{N-1} \big( 1-M_{1}(p_{i}) \big) - \sum_{i=N}^\infty   \mathbb{P}_i(B_i) 
\le 10^{-10^{13}},
\]
which contradicts \eqref{differencelemineq} and completes a proof of Theorem~\ref{mainthmone}.

For a proof of Theorem~\ref{mainthmoneptfive}, we return to \eqref{theorem1and2proofeq} and repeat the analogous argument using the choice for $N$ and the bounds for $0 < \Delta < 1/12$ in Lemmas~\ref{differencelem} and \ref{smoothbd}.

%%%%
\section{A proof of Lemma~\ref{tailestimate}}

Initially, we consider $\delta_{i} = 0$ for $i < N$ and $\delta_{i} = \delta'$ for $i \ge N$, where $\delta' > 0$ is to be determined.  This will help provide some insight into our selection for $\delta'$.  
We will find an upper bound on 
\begin{equation}\label{appiieq1}
\prod_{j < i}\bigg(1+\dfrac{15p_{j}^3 + 5p_{j}^2 + 5p_{j} - 1}{(1-\delta_j) (p_{j}-1)^{4}}\bigg)
\end{equation}
that holds for all $i \ge 10^{9}$.   
Since $N \ge 10^{9}$, we see that $\delta_{i} = 0$ for $1 \le i < 10^{9}$. 
We set
\begin{equation}
\label{mzerobd}
M_{0} 
= \prod_{1 \le j < 10^{9}}\bigg(1+\dfrac{15p_{j}^3 + 5p_{j}^2 + 5p_{j} - 1}{(p_{j}-1)^{4}}\bigg)
\le 36109748165730021774.850093,
\end{equation}
where the inequality was established by a direct computation.
To clarify, the upper limit of $10^{9}$ in the product above was chosen as it was reasonable to compute the product directly with this choice and not easy to go much beyond this choice; the computation above took several hours using Magma 2023 on a 2022 Macbook Pro with an Apple M2 chip (with 500 digit precision).  
Observe that $M_{0}$ is a portion of the product in \eqref{appiieq1} for all $i \ge 10^{9}$.  
We examine now the contribution of the rest.  
Since $\delta_{i} = \delta'$ for $i \ge 10^{9}$, one can check that the product in \eqref{appiieq1} is at most $M_{0}$ times the product
\begin{gather*}
\prod_{10^{9} \le j < i} \Bigg(1+\dfrac{\eta \cdot \big(p_{j}^3 + (1/3)p_{j}^2 + (1/3)p_{j} - 1/15\big)}{(p_{j}-1)^{4}}\Bigg) 
= \prod_{10^{9} \le j < i} \bigg(1+\dfrac{\eta \,R_{j}}{p_{j}-1}\bigg),
\end{gather*}
where
\[
\eta = \dfrac{15}{1-\delta'}
\quad \text{ and } \quad
R_{j} = 1 + \dfrac{10}{3\,(p_{j}-1)} + \dfrac{4}{(p_{j}-1)^{2}} + \dfrac{8}{5\,(p_{j}-1)^{3}}.
\]
Using that $\log(1+x) < x$ for $x > 0$, we obtain
\[
\log \prod_{10^{9} \le j < i} \bigg(1+\dfrac{\eta R_{j}}{p_{j}-1}\bigg)
< \eta \cdot \Big( \sig_{1} + \sig_{2} + \sig_{3} + \sig_{4} \Big),
\]
where
\begin{gather*}
\sig_{1} = \sum_{10^{9} \le j < i} \dfrac{1}{p_{j}-1}, \qquad 
\sig_{2} = \dfrac{10}{3} \sum_{10^{9} \le j < i} \dfrac{1}{(p_{j}-1)^{2}}, \\
\sig_{3} = 4 \sum_{10^{9} \le j < i} \dfrac{1}{(p_{j}-1)^{3}}
\qquad \text{and} \qquad \sig_{4} = \dfrac{8}{5} \sum_{10^{9} \le j < i} \dfrac{1}{(p_{j}-1)^{4}}.
\end{gather*}
For convenience, we set $p' = p_{10^{9}} = 22801763489$ below.  Note that each $\sig_{j}$ is $0$ for $i \le 10^{9}$, so we focus on what happens to these sums for $i \ge 10^{9} + 1$.

For $\sig_{1}$, since $j \ge 10^{9}$, we have
\[
\dfrac{1}{p_{j}-1} = \dfrac{p_{j}}{p_{j}-1} \cdot \dfrac{1}{p_{j}}
\le \dfrac{p'}{(p'-1) \,p_{j}}.
\]
By a direct computation, we deduce
\[
\sig_{1} \le \dfrac{p'}{p'-1} 
 \bigg( \sum_{j < i} \dfrac{1}{p_{j}} - \sum_{1 \le j < 10^{9}} \dfrac{1}{p_{j}}\bigg)
\le \dfrac{p'}{p'-1}  \bigg( \sum_{j < i} \dfrac{1}{p_{j}} - 3.4332861 \bigg).
\]
With $i \ge 10^{9}+1$, an inequality from \cite[(3.18)]{rossch} gives
\[
\sum_{j < i} \dfrac{1}{p_{j}} < \log \log p_{i-1} + B + \dfrac{1}{2\log^{2}p_{i-1}},
\]
where $B < 0.26149721284765$.
Thus, since $i-1 \ge 10^{9}$, we obtain
\begin{align*}
\sig_{1} 
&< \dfrac{p'}{p'-1} \bigg( \log \log p_{i-1} + B + \dfrac{1}{2\log^{2}p'} - 3.4332861 \bigg) \\[5pt]
&\le \dfrac{p'}{p'-1} \big(\!\log \log p_{i} - 3.17090988\big).
\end{align*}

To bound the remaining $\sig_{j}$, we make use of the identity 
\begin{align*}
&\dfrac{1}{n(n+1)(n+2) \cdots (n+k)} \\
&\qquad \qquad = \dfrac{1}{k\,n(n+1) \cdots (n+k-1)} - \dfrac{1}{k\,(n+1)(n+2) \cdots (n+k)},
\end{align*}
where $k \ge 2$ is an integer.  This identity allows us to easily evaluate 
telescoping sums below.
With this in mind, recalling $p' = p_{10^{9}} = 22801763489$, we obtain
\begin{align*}
\sum_{j \ge 10^{9}} \dfrac{1}{(p_{j}-1)^{2}}
\le \sum_{n \ge p'-2} \dfrac{1}{n(n+1)} 
= \dfrac{1}{p'-2} < 4.39 \cdot 10^{-11},
\end{align*}
\begin{align*}
\sum_{j \ge 10^{9}} \dfrac{1}{(p_{j}-1)^{3}}
\le \sum_{n \ge p'-3} \dfrac{1}{n(n+1)(n+2)} 
= \dfrac{1}{2 (p'-3)(p'-2)} < 9.62 \cdot 10^{-22},
\end{align*}
\begin{align*}
\sum_{j \ge 10^{9}} \dfrac{1}{(p_{j}-1)^{4}}
&\le \sum_{n \ge p'-4} \dfrac{1}{n(n+1)(n+2)(n+3)} \\
&= \dfrac{1}{3(p'-4)(p'-3)(p'-2)}
< 2.82 \cdot 10^{-32}.
\end{align*}
As a consequence, we see that
\begin{align*}
\sig_{2} &<  \dfrac{10}{3} \cdot 4.39 \cdot 10^{-11}
< 1.464 \cdot 10^{-10}, \\[12pt]
\sig_{3} &< 4 \cdot 9.62 \cdot 10^{-22}
< 3.85 \cdot 10^{-21},  \\[12pt]
\sig_{4} &< \dfrac{8}{5} \cdot 2.82 \cdot 10^{-32}
< 4.52 \cdot 10^{-32}.
\end{align*}

Combining the estimates for the $\sig_{j}$, $1 \le j \le 4$, we deduce that
\begin{align*}
\log \prod_{10^{9} \le j < i} \bigg(1+\dfrac{\eta R_{j}}{p_{j}-1}\bigg)
&< \dfrac{\eta p'}{(p'-1)} \big(\!\log \log p_{i} - 3.17090988\big) + \eta \cdot 1.465 \cdot 10^{-10}.
\end{align*}
Setting
\[
\eta' = \dfrac{\eta p'}{(p'-1)},
\]
we obtain that the product in \eqref{appiieq1} is bounded above by
\begin{equation}
\label{prelimbd}
M_{0} \,(\log p_{i})^{\eta'} \exp\big(\!- 3.17090988 \,\eta' + \eta \cdot 1.465 \cdot 10^{-10} \,\big).
\end{equation}
This is a preliminary bound on \eqref{appiieq1} that we want with the intent of using the estimate \eqref{mzerobd} on $M_{0}$ and an appropriate choice of $\delta'$ and hence $\eta$ and $\eta'$.  Observe that the definitions of $\eta$ and $\eta'$ imply that they are greater than $15$.  We have some flexibility on the choice of $\delta'$, but it will be helpful to keep the exponent $\eta'$ on $\log p_{i}$ as small as possible and also an integer (though this latter condition is only for convenience with the arguments below).  With that in mind, we choose $\delta'$ so that we can replace $\eta'$ with $16$.  More precisely, we choose
\[
\delta' = \dfrac{95007347}{1520117553}.
\]
Then a direct computation of $\eta'$ gives 
\[
\eta' = \dfrac{519920413784751336255}{32495025861546958528}
= 15.99999999999999999406\ldots < 16.
\]
One can also check that $\eta = 22801763295/1425110206$.  From \eqref{mzerobd} and the preliminary bound \eqref{prelimbd} on \eqref{appiieq1}, we now get our final estimate of the product in \eqref{appiieq1}, namely
\begin{equation}\label{appiieq2}
\prod_{j < i}\bigg(1+\dfrac{15p_{j}^3 + 5p_{j}^2 + 5p_{j} - 1}{(1-\delta_j) (p_{j}-1)^{4}}\bigg)
< 0.0033411 \log^{16}p_{i}.
\end{equation}

%%%

From \eqref{pibisecondmomentbd}, \eqref{appiieq2}, and our choice of $\delta'$, we obtain
\[
\mathbb{P}_i(B_i) < \dfrac{27 \cdot 0.0033411\,\log^{16}p_{i}}{256\,(1-\delta')\delta'^{3}(p_{i}-1)^4} 
< \dfrac{1.539578883\,\log^{16}p_{i}}{(p_{i}-1)^4} 
\qquad \text{for $i \ge N$}.
\]
We apply the inequality
\[
\dfrac{1}{(p_{i}-1)^{4}} = \dfrac{1}{p_{i}^{4}} \cdot \dfrac{p_{i}^{4}}{(p_{i}-1)^{4}}
\le \dfrac{1}{p_{i}^{4}} \cdot \dfrac{p'^{4}}{(p'-1)^{4} } < \dfrac{1.00000000018}{p_{i}^{4}}
\quad \text{for $i \ge 10^{9}$}.
\]
With $N \ge 10^{9}$, we deduce
\[
\mathbb{P}_i(B_i) < \dfrac{1.5395788833 \,\log^{16}p_{i}}{p_{i}^{4}}
\quad \text{for $i \ge N$}.
\]
Therefore, we obtain
\[
\dfrac{1}{1.5395788833} \,\sum_{i \ge N}\mathbb{P}_i(B_i) 
\le \sum_{i \ge N} \dfrac{\log^{16}p_{i}}{p_{i}^4}
\le \sum_{n \ge p_{N}} \dfrac{\log^{16}n}{n^4}
\le \int_{p_{N}-1}^{\infty} \dfrac{\log^{16}t}{t^4} \,dt.
\]
Also, we have
\[
\log p_{N} \ge \log p' > 23.8501037
\]
and
\[
\dfrac{p_{N}^{3}}{(p_{N}-1)^{3}} \le \dfrac{p'^{3}}{(p'-1)^{3}} < 1.000000000132.
\]
Set $\alpha = 23.8501037$ and $\beta = 1.000000000132$. 
A direct computation of the integral above shows that there is a polynomial
$F(x)$ of degree $16$ with positive rational coefficients, which one can compute 
explicitly but we do not write down here to save space, satisfying
\begin{align*}
\int_{p_{N}-1}^{\infty} \dfrac{\log^{16}t}{t^4} \,dt 
&= \dfrac{F\big(\log (p_{N}-1)\big)}{(p_{N}-1)^{3}} 
< \beta \cdot \dfrac{F(\log p_{N})}{p_{N}^{3}} \\[8pt]
&\le \dfrac{\beta \log^{16} p_{N}}{p_{N}^{3}} \cdot G(1/\log p_{N})
< \dfrac{\beta \log^{16} p_{N}}{p_{N}^{3}} \cdot G(1/\alpha),
\end{align*}
where $G(x) = x^{16} F(1/x)$ so that $F(x) = x^{16} G(1/x)$.
Replacing $\alpha$ and $\beta$ with their numerical values and $G(x)$ with its explicit form, we obtain
\[
\int_{p_{N}-1}^{\infty} \dfrac{\log^{16}t}{t^4} \,dt  < \dfrac{0.42722258614 \,\log^{16} p_{N}}{p_{N}^{3}}
\qquad \text{for $N \ge 10^{9}$.}
\]

Lemma~\ref{tailestimate} now follows as $1.5395788833 \cdot 0.42722258614 < 0.657743$.

%%%%
\section{A proof of Lemma~\ref{m1bound}}

Noting that $p_{62} = 293$, J.~B.~Rosser and L.~Schoenfeld \cite[(3.29)]{rossch} give the estimate
\begin{equation}\label{appIIIdisplay1}
\prod_{j=1}^{i-1} \bigg(  1 + \dfrac{1}{p_{j}-1}  \bigg) < e^{\gamma} \log{p_{i-1}} \bigg(  1 + \dfrac{1}{2 \log^{2}p_{i-1}}  \bigg), \qquad \text{for } i \ge 63,
\end{equation}
where $\gamma = 0.5772156649\ldots$ is Euler's constant.
Our goal is to obtain a reasonable explicit lower bound on 
\[
M' = \prod_{i=3}^{N-1} \big(1-M_{1}(p_{i})\big),
\qquad \text{where } M_{1}(p_{i}) 
= \dfrac{1}{p_{i}-1} \prod_{j=1}^{i-1} \bigg(  1 + \dfrac{1}{p_{j}-1}  \bigg)
\]
for $N \ge 10^{9}$.  
Let $B_{i-1}$ denote the right-hand side of \eqref{appIIIdisplay1}.  
Since $x+1/(2x)$ is a strictly increasing function for $x > 1$, we see that $B_{i-1} < B_{i}$ for all $i \ge 3$.  
Hence, 
\[
M_{1}(p_{i}) < \dfrac{B_{i-1}}{p_{i}-1} < \dfrac{B_{i}}{p_{i}-1} \qquad \text{for $i \ge 63$}.
\]
One can check that the function $(\log x + 1/(2\log x))/(x-1)$ is decreasing for $x \ge 3$, which implies that the expression $B_{i}/(p_{i}-1)$ is decreasing for $i \ge 2$.  As $B_{3}/(p_{3}-1) \le 0.731$, we deduce that $B_{i}/(p_{i}-1) < 1$ for $i \ge 3$.  Note that $B_{i}/(p_{i}-1)$ is also positive for $i \ge 3$.  

For a lower bound for $M'$, we take 
\[
N_{0} = 10^{9}
\]  
and use
\begin{align*}
M' \ge \prod_{i=3}^{N_{0}-1} \big(1-M_{1}(p_{i})\big)
\cdot \prod_{i=N_{0}}^{N-1} \bigg(1-\dfrac{B_{i}}{p_{i}-1}\bigg).
\end{align*}
Note that $\log(1-x)/x$ is a decreasing function for $x \in (0,1)$, 
and our choice for $N_{0}$ implies that $p_{N_{0}} = 22801763489$ and that
$B_{N_{0}}/(p_{N_{0}}-1)$ is small enough so that 
\[
\log \bigg(1-\dfrac{B_{N_{0}}}{p_{N_{0}}-1}\bigg) > -1.000000001 \cdot \dfrac{B_{N_{0}}}{p_{N_{0}}-1}.
\] 
Recalling that $B_{i}/(p_{i}-1) \in (0,1)$ is decreasing for $i \ge 3$, we deduce
\[
\log \bigg(1-\dfrac{B_{i}}{p_{i}-1}\bigg) > -1.000000001 \cdot \dfrac{B_{i}}{p_{i}-1}
\qquad \text{for $i \ge N_{0}$}.
\]
Then
\[
\log \prod_{i=N_{0}}^{N-1} \bigg(1-\dfrac{B_{i}}{p_{i}-1}\bigg)
> - \sum_{i=N_{0}}^{N-1} \dfrac{1.000000001 \cdot B_{i}}{p_{i}-1}.
\]
For each $i$ in this product, we have the estimate
\begin{align*}
\dfrac{B_{i}}{p_{i}-1} 
&= \dfrac{e^{\gamma} \log p_{i}}{p_{i}} \cdot \dfrac{p_{i}}{p_{i}-1}  \cdot \bigg(  1 + \dfrac{1}{2\log^{2}p_{i}}  \bigg) \\[5pt]
&\le \dfrac{e^{\gamma} \log p_{i}}{p_{i}} \cdot \dfrac{p_{10^{9}}}{p_{10^{9}}-1}  \cdot \bigg(  1 + \dfrac{1}{2\log^{2}p_{10^{9}}}  \bigg)
\le \dfrac{1.782638 \,\log p_{i}}{p_{i}}.
\end{align*}
Therefore, we deduce
\begin{align*}
\log \prod_{i=N_{0}}^{N-1} \bigg(1-\dfrac{B_{i}}{p_{i}-1}\bigg)
&> - \sum_{i=N_{0}}^{N-1} \dfrac{1.782638002 \log p_{i}}{p_{i}} \\[5pt]
&= -1.782638002 \,\bigg( \sum_{i=1}^{N-1} \dfrac{\log p_{i}}{p_{i}} - \sum_{i=1}^{N_{0}-1} \dfrac{\log p_{i}}{p_{i}} \bigg).
\end{align*}
From \cite[(3.22)]{rossch}, for $N \ge 10^{9}$ and $E = -1.3325822757$, we see that
\[
\sum_{i=1}^{N-1} \dfrac{\log p_{i}}{p_{i}} 
< \log p_{N-1} + E + \dfrac{1}{2 \log p_{N-1}} \\[5pt]
< \log p_{N} + E + \dfrac{1}{2 \log p_{N}},
\]
as the function $x+1/(2x)$ is increasing for $x > 1$. 
A direct computation gives
\[
\sum_{i=1}^{N_{0}-1} \dfrac{\log p_{i}}{p_{i}} > 22.5175296.
\]
Thus, we deduce
\begin{align*}
\prod_{i=N_{0}}^{N-1} \bigg(1-\dfrac{B_{i}}{p_{i}-1}\bigg) 
&> \exp\big(\!-1.782638002 \,( \log p_{N} + E - 22.5175296 + 1/(2 \log p_{N}) ) \big) \\
&> \dfrac{\exp(42.51611578)}{p_{N}^{1.7826381}} \cdot \exp\big(\!-0.8913191/\log p_{N}\big).
\end{align*}
To finish our lower bound on $M'$, we calculate directly that
\[
\prod_{i=3}^{N_{0}-1} \big(1-M_{1}(p_{i})\big)
>  1.319884728 \cdot 10^{-18}.
\]
As a side note on such computations, when computing $M_{1}(p_{i})$, one has
\[
\prod_{j=1}^{i-1} \bigg(  1 + \dfrac{1}{p_{j}-1}  \bigg) 
= \bigg(  1 + \dfrac{1}{p_{i-1}-1}  \bigg) \cdot \prod_{j=1}^{i-2} \bigg(  1 + \dfrac{1}{p_{j}-1}  \bigg)
\]
so that the product in the definition of $M_{1}(p_{i})$ can be computed with just one multiplication after computing the product for $M_{1}(p_{i-1})$.
It now follows that
\begin{align*}
M' &> 1.319884728 \cdot 10^{-18} \cdot \dfrac{\exp(42.51611578)}{p_{N}^{1.7826381}} \cdot \exp\big(\!-0.8913191/\log p_{N}\big) \\
&> \dfrac{3.84636486599}{p_{N}^{1.7826381}} \cdot \exp\big(\!-0.8913191/\log p_{N}\big),
\qquad \text{ for $N \ge10^{9}$},
\end{align*}
completing a proof of Lemma~\ref{m1bound}.

%%%%
\section{A proof of Lemma~\ref{differencelem}}

We begin by considering $\Delta > 0$ and $N \ge 10^{9}$.  
We deduce from Lemma~\ref{tailestimate} and Lemma~\ref{m1bound} that 
\begin{align*}
\Delta \, \prod_{j = 3}^{N-1}(1&-M_1(p_j)) - \sum_{i=N}^{\infty}  \ \mathbb{P}_i(B_i) \\[5pt]
&> \dfrac{3.84636486599\,\Delta}{p_{N}^{1.7826381}} \cdot \exp\bigg(\!\dfrac{-0.8913191}{\log p_{N}}\bigg) - \dfrac{0.657743 \log^{16}\!p_{N}}{p_{N}^{3}} \\[7pt]
&> \dfrac{0.657743 \log^{16}\!p_{N}}{p_{N}^{3}} \bigg(  \dfrac{5.8478233 \,\Delta\,p_{N}^{1.2173619}}{\log^{16}\!p_{N}} \cdot \exp\bigg(\!\dfrac{-0.8913191}{\log p_{N}}\bigg)  - 1 \bigg).
\end{align*}
We set
\[
f(x) = \dfrac{5.8478233 \,\Delta\,x^{1.2173619}}{\log^{16}\!x} \cdot \exp\bigg(\!\dfrac{-0.8913191}{\log x}\bigg).
\]
From \cite[Theorem 3]{rossch}, the estimate
\[
n\bigg(\!\log n + \log \log n - \dfrac{3}{2} \bigg) < p_{n} < n\bigg(\!\log n + \log \log n - \dfrac{1}{2} \bigg) 
\]
holds for all positive integers $n \ge 20$.   Thus, we have $\tau_{1} < p_{N} < \tau_{2}$, where
$\tau_{1} = \tau_{1}(N)$ and $\tau_{2} = \tau_{2}(N)$ are defined by
\begin{equation}
\label{taudefs}
\tau_{1} = N \bigg(\!\log N + \log \log N - \dfrac{3}{2} \bigg)
\quad \text{ and } \quad
\tau_{2} = N \bigg(\!\log N + \log \log N - \dfrac{1}{2} \bigg).
\end{equation}
One checks that $(\log^{16}x)/x^{3}$ is decreasing for $x > 208$ and that
$f(x)$ is increasing for $x > 4.9 \cdot 10^{5}$.
As $N \ge 10^{9}$, the prime $p_{N}$ exceeds these lower bounds on $x$.  
Thus, the left-hand side of \eqref{differencelemineq} is bounded below by
\begin{equation}\label{bdforDeltalemineq}
\dfrac{0.657743 \log^{16}\!p_{N}}{p_{N}^{3}} \big( f(p_N)  - 1 \big)
> \dfrac{0.657743 \log^{16}\!\tau_{2}}{\tau_{2}^{3}} \big( f(\tau_{1})  - 1 \big).
\end{equation}
A direct computation with $\Delta = 1/12$ and $N = 1.5320302 \cdot 10^{21}$ gives the bound $4.7596769 \cdot 10^{-50}$ in \eqref{differencelemineq}.

For $0 < \Delta < 1/12$, the conditions in Lemma~\ref{differencelem} imply that  
\begin{equation}
\label{thenvalue}
N = \big\lceil 2.8 \cdot 10^{20}/\Delta \big\rceil
\ge 2.8 \cdot 10^{20} \cdot 12 
= 3.36 \cdot 10^{21}.
\end{equation}
Let $N_{\text{min}}$ denote the right-hand side of \eqref{thenvalue}.  
The function $\log(\log(x))/\log(x)$ is decreasing for $x > 16$, and the function $\tau_{1}(N)$ is increasing for $N > 3$, so we see that
\begin{equation}\label{theloglogestimate}
\begin{split}
\log (N \log N) 
&= (\log N)\bigg(  1 + \dfrac{\log\log N}{\log N}  \bigg) \\[5pt]
&\le (\log N)\bigg(  1 + \dfrac{\log\log N_{\text{min}}}{\log N_{\text{min}}}  \bigg)
< 1.07875 \log N
\end{split}
\end{equation}
as well as 
\[
\exp\bigg(\!\dfrac{-0.8913191}{\log \tau_{1}(N)}\bigg) 
\ge \exp\bigg(\!\dfrac{-0.8913191}{\log \tau_{1}(N_{\text{min}})}\bigg) 
> 0.98348.
\]
One checks that the function $x^{1.2173619}/\log^{16} x$ is increasing for $x > 510515$, which is less than $N \log N$.  
We use that $\tau_{1} > N \log N$ now to see that
\begin{align*}
f(\tau_{1})  &>  \dfrac{5.8478233 \cdot 0.98348 \,\Delta \,\big(N \log N\big)^{1.2173619}}{\log^{16}\!\big(N \log N\big)} \\[5pt]
&> \dfrac{5.7512 \,\Delta \,\big(N \log N\big)^{1.2173619}}{(1.07875 \log N)^{16}} \\[5pt]
&> \dfrac{1.71 \,\Delta \,N^{1.2173619}}{(\log N)^{14.7826381}}.
\end{align*}

The function $x^{1.2173619}/(\log x)^{14.7826381}$ is increasing for $x > 187808$ and we have \eqref{thenvalue} where $0 < \Delta < 1/12$, so we deduce
\begin{align*}
\dfrac{1.71 \,\Delta \,N^{1.2173619}}{(\log N)^{14.7826381}}
&\ge \dfrac{1.71 \,\Delta \,(2.8 \cdot 10^{20}/\Delta)^{1.2173619}}{\big(\log (2.8 \cdot 10^{20}/\Delta)\big)^{14.7826381}} \\[8pt]
&> \dfrac{1.33225 \cdot 10^{25} \,(1/\Delta)^{0.2173619}}{\big(\log (2.8 \cdot 10^{20}/\Delta)\big)^{14.7826381}}.
\end{align*}
The function $x^{0.2173619}/\big(\log (2.8 \cdot 10^{20} \cdot x)\big)^{14.7826381}$, for $x > 1$, has a minimum value at $x = 1.2272\ldots \cdot 10^9$, and the minimum value exceeds $7.68 \cdot 10^{-26}$.
Thus, we obtain from $\Delta < 1/12$ that
\[
f(\tau_{1}) >  1.33225 \cdot 10^{25} \cdot 7.68 \cdot 10^{-26} > 1.023.
\]

Recalling that \eqref{bdforDeltalemineq} is a lower bound for the left-hand side of \eqref{differencelemineq}, we see that the left-hand side of \eqref{differencelemineq} is at least
\[
\dfrac{0.657743 \log^{16}\!\tau_{2}}{\tau_{2}^{3}} \cdot 0.023
> \dfrac{0.015\,\log^{16}\!\tau_{2}}{\tau_{2}^{3}}.
\]
From \eqref{taudefs} and \eqref{theloglogestimate}, we see that 
\begin{equation}
\label{tau2bd}
N \log N < \tau_{2} < 1.07875 \,N \log N.
\end{equation}
Since $(\log^{16}\!x)/x^3$ is decreasing for $x > 208$, we obtain with the help of \eqref{thenvalue} that
\begin{align*}
\dfrac{0.015\,\log^{16}\!\tau_{2}}{\tau_{2}^{3}}
&> \dfrac{0.015\,\log^{16}(1.07875 \,N \log N)}{(1.07875 \,N \log N)^{3}}
> \dfrac{0.015\,\log^{16}N}{(1.07875 \,N \log N)^{3}} \\[5pt]
&> \dfrac{0.0119489 \,\log^{13}N}{N^{3}} > \dfrac{0.0119489 \,\log^{13}(N_{\text{min}})}{(2.8 \cdot 10^{20}/\Delta + 1)^{3}} \\[5pt]
&= \dfrac{0.0119489 \,\log^{13}(N_{\text{min}})}{(2.8 \cdot 10^{20}/\Delta)^{3}} \cdot \dfrac{(2.8 \cdot 10^{20}/\Delta)^{3}}{(2.8 \cdot 10^{20}/\Delta + 1)^{3}} \\[5pt]
&\ge \dfrac{0.0119489 \,\log^{13}(N_{\text{min}})}{(2.8 \cdot 10^{20}/\Delta)^{3}} \cdot \dfrac{(2.8 \cdot 10^{20} \cdot 12)^{3}}{(2.8 \cdot 10^{20} \cdot 12 + 1)^{3}} \\[5pt]
&> 5.9329 \cdot 10^{-42} \cdot \Delta^{3},
\end{align*}
completing a proof of \eqref{differencelemineq}.

%%%%
\section{A proof of Lemma~\ref{smoothbd}}

We begin here by giving a preliminary result of independent interest which is equivalent to the statement that for $n \ge 44$, the average value of $\log\log p$ over the first $n$ primes $p$ is greater than $\log\log n$.  This lemma will allow us to bound from below the product of $\log p$ over primes $p \le p_{N}$, where this product comes up naturally from a certain volume estimate used to bound the number of $p_{N}$-smooth numbers $\le p_{N} K$ (with $K$ as in Lemma~\ref{smoothbd}).

\begin{lemma}
\label{logloglemma}
Let $n \ge 44$ be a positive integer.  Then
\begin{equation}
\label{logloglemmaeq0}
\sum_{p \le p_{n}} \log\log p \ge n \log \log n.
\end{equation}
\end{lemma}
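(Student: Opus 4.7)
The plan is to set $S_n = \sum_{k=1}^n \log\log p_k - n\log\log n$ and prove $S_n \ge 0$ for $n \ge 44$ by induction: first, verify the base case $S_{44} \ge 0$ by direct numerical computation; second, establish the inductive step $S_{n+1} \ge S_n$ for all $n \ge 44$.

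For the inductive step, I would write
\[
S_{n+1} - S_n \;=\; \bigl(\log\log p_{n+1} - \log\log(n+1)\bigr) \;-\; n\bigl(\log\log(n+1) - \log\log n\bigr).
\]
For the first bracket, I would invoke the sharp Rosser bound $p_{n+1} > (n+1)\bigl(\log(n+1) + \log\log(n+1) - 3/2\bigr)$, valid for $n+1 \ge 20$ and cited earlier in the excerpt, giving
\[
\log\log p_{n+1} - \log\log(n+1) \;\ge\; \log\!\Bigl(1 + \tfrac{\log(\log(n+1) + \log\log(n+1) - 3/2)}{\log(n+1)}\Bigr).
\]
For the second bracket, since $\frac{d}{dt}\log\log t = 1/(t\log t)$ is decreasing for $t > e$,
\[
n\bigl(\log\log(n+1) - \log\log n\bigr) \;=\; n \int_n^{n+1} \frac{dt}{t\log t} \;\le\; \frac{1}{\log n}.
\]
Thus the inductive step reduces to the single-variable inequality
\[
\log\!\Bigl(1 + \tfrac{\log(\log(n+1) + \log\log(n+1) - 3/2)}{\log(n+1)}\Bigr) \;\ge\; \tfrac{1}{\log n}, \qquad n \ge 44.
\]
Asymptotically this reads $\log\log n/\log n \ge 1/\log n$, i.e.\ $\log\log n \ge 1$, which holds for $n \ge 16$; a direct numerical check at the boundary $n = 44$ gives a positive margin of roughly $0.03$, so the inequality holds throughout the range $n \ge 44$ and can be verified rigorously by elementary calculus (showing that the difference of the two sides is positive at $n=44$ and that its sign is controlled as $n$ grows).

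The main obstacle is twofold. First, the base case requires tabulating $\log\log p_k$ for $k = 1, \ldots, 44$ to confirm $\sum_{k=1}^{44} \log\log p_k \ge 44\log\log 44 \approx 58.5$; this is a finite computation but must be carried out with enough precision to determine the sign of $S_{44}$. Second, the margin in the inductive inequality near $n = 44$ is small, so the analytic bounds must be handled carefully, and if the above estimate is not quite tight enough near the boundary, one can extend the numerical verification over a longer initial range (say, up to $n = 1000$) and only then invoke the analytic argument for the tail.
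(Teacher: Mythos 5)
Your proposal takes a genuinely different route from the paper's proof, and the core idea is sound. The paper verifies the inequality numerically for $44\le n\le 10^5$ and then, for $n>10^5$, writes $\sum_{p\le p_n}\log\log p$ as a Riemann--Stieltjes integral, $n\log\log p_n - \int_2^{p_n}\pi(t)/(t\log t)\,dt$, bounds the subtracted integral by $1.83\,n/\log n$ using explicit estimates of Dusart on $\pi(t)$, and compares that with the gain $n\log\log p_n - n\log\log n > 0.9\,n\log\log n/\log n$ from the Rosser bound $p_n>n\log n$. Your proposal instead shows $S_{n+1}\ge S_n$ by telescoping, reducing the whole problem to a one-base-case induction. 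The bookkeeping in your decomposition of $S_{n+1}-S_n$ is correct, the second bracket is correctly bounded by $1/\log n$ via the mean value theorem, and the use of the Rosser lower bound $p_{n+1}>(n+1)(\log(n+1)+\log\log(n+1)-3/2)$ for the first bracket is valid for $n+1\ge 20$. Your approach is appreciably more elementary: it bypasses the prime-counting integral and Dusart's bounds on $\pi(t)$ entirely, and it requires tabulating only the first $44$ primes rather than the first $10^5$.

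The one place that still needs attention is your last display, the closing inequality
\[
\log\!\Bigl(1 + \tfrac{\log(\log(n+1)+\log\log(n+1)-3/2)}{\log(n+1)}\Bigr) \;\ge\; \frac{1}{\log n},\qquad n\ge 44.
\]
You describe its truth as "controlled as $n$ grows," which is not yet a proof. The asymptotic heuristic $\log\log n\ge 1$ is not quite the right reduction because you are discarding the second-order terms of $\log(1+x)$ and replacing $\log(\log(n+1)+\log\log(n+1)-3/2)$ by $\log\log n$; made properly, the elementary lower bound $\log(1+x)\ge x-x^2/2$ clears the hurdle only once $\log\log n$ exceeds roughly $2$, i.e.\ for $n$ in the low thousands. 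Meanwhile the margin near $n=44$ is genuinely thin (about $0.028$, and the base case $S_{44}$ itself is only about $0.04$). So the honest version of your proof either (a) extends the numerical verification of the inductive inequality up through the few thousand values of $n$ where the analytic bound takes over, as you suggest as a fallback, or (b) proves monotonicity of the function $g(n)=\log(1+A(n)/L(n))-1/\log n$ on $[44,\infty)$, which appears true from spot-checks but is another calculus argument to supply. Neither step is hard, but one of them is required before "the inequality holds throughout the range" is a theorem rather than an observation.
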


\begin{proof}
We verified the inequality \eqref{logloglemmaeq0} for $44 \le n \le 10^{5}$ by a direct computation. 
Suppose now that $n > 10^{5}$. 
We write the sum as a Riemann–Stieltjes integral to obtain
\[
\sum_{p \le p_{n}} \log\log p = \int_{1.5}^{p_{n}} \log\log t \,d\pi(t)
= n \log\log p_{n} - \int_{2}^{p_{n}} \dfrac{\pi(t)}{t \log t} \,dt.
\]
From \cite[(3.12)]{rossch}, we have $p_{n} > n \log n$ for all $n \ge 1$.  
For $n > 10^{5}$, we have
\[
\dfrac{\log\log n}{\log n} \le \dfrac{\log\log (10^{5})}{\log (10^{5})} < 0.22.
\]
As the function $\log(1+x)/x$ is decreasing for $x > 0$, we obtain $\log(1+x) > 0.9 x$ for all $x \in (0,0.22]$.  
Since $n \ge 10^{5}$, we deduce that
\[
n \log\log p_{n} > n \log \bigg(  \log n \bigg(  1 + \dfrac{\log\log n}{\log n}  \bigg)  \bigg)
> n \log\log n + \dfrac{0.9 \,n \log\log n}{\log n}.
\]
We now want an upper bound on 
\begin{equation}
\label{logloglemmaeq1}
\int_{2}^{p_{n}} \dfrac{\pi(t)}{t \log t} \,dt 
= \int_{2}^{\sqrt{n}} \dfrac{\pi(t)}{t \log t} \,dt + \int_{\sqrt{n}}^{n} \dfrac{\pi(t)}{t \log t} \,dt
+ \int_{n}^{p_{n}} \dfrac{\pi(t)}{t \log t} \,dt.
\end{equation}

For the first integral on the right, we use that $\pi(t) \le t \log t$ for all $t \ge 2$ so that, for $n > 10^{5}$, we obtain
\begin{equation}
\label{logloglemmaeq2}
\int_{2}^{\sqrt{n}} \dfrac{\pi(t)}{t \log t} \,dt \le \sqrt{n} = \dfrac{\log n}{\sqrt{n}} \cdot \dfrac{n}{\log n}
< \dfrac{\log 10^{5}}{\sqrt{10^{5}}} \cdot \dfrac{n}{\log n} < \dfrac{0.04\,n}{\log n}.
\end{equation}
From \cite[Corollary~5.2]{dusart}, for $t > 10^{2.5}$, we see that
\begin{align*}
\pi(t) &\le \dfrac{t}{\log t} \bigg(  1 + \dfrac{1}{\log t} + \dfrac{2.53816}{\log^{2} t}  \bigg) \\[5pt]
&< \dfrac{t}{\log (10^{2.5})} \bigg(  1 + \dfrac{1}{\log (10^{2.5})}  + \dfrac{2.53816}{\log^{2}(10^{2.5})} \bigg)
< 0.22 \,t.
\end{align*}
Thus, the second integral on the right of \eqref{logloglemmaeq1} can be estimated by
\begin{equation}
\label{logloglemmaeq2pt5}
\int_{\sqrt{n}}^{n} \dfrac{\pi(t)}{t \log t} \,dt
< \int_{\sqrt{n}}^{n} \dfrac{0.22}{\log t} \,dt
< \dfrac{0.22 \,n}{\log \sqrt{n}} = \dfrac{0.44 \,n}{\log n}.
\end{equation}
From \cite[Corollary~5.2]{dusart}, for $t > 10^{5}$, we see that
\[
\pi(t) < \dfrac{t}{\log t} \bigg(  1 + \dfrac{1}{\log t} + \dfrac{2.53816}{\log^{2} t}  \bigg)
< \dfrac{t}{\log t} \bigg(  1 + \dfrac{1}{\log (10^{5})}  + \dfrac{2.53816}{\log^{2}(10^{5})} \bigg)
< \dfrac{1.10601\,t}{\log t}.
\]
Thus, the last integral on the right of \eqref{logloglemmaeq1} can be estimated by
\[
\int_{n}^{p_{n}} \dfrac{\pi(t)}{t \log t} \,dt
\le \int_{n}^{p_{n}} \dfrac{1.10601}{\log^{2} t} \,dt
\le \dfrac{1.10601 \,p_{n}}{\log^{2} n}.
\]
As noted in the previous section, we have
\[
p_{n} < n\bigg(\!\log n + \log \log n - \dfrac{1}{2} \bigg) \qquad \text{for $n \ge 20$}.
\]
For $n \ge 10^{5}$, one checks then that 
\[
\dfrac{p_{n}}{n \log n} \le 1+ \dfrac{\log \log n}{\log n} \le 1+ \dfrac{\log \log (10^{5})}{\log (10^{5})} \le 1.21224.
\]
Therefore,
\begin{equation}
\label{logloglemmaeq3}
\int_{n}^{p_{n}} \dfrac{\pi(t)}{t \log t} \,dt
\le  \dfrac{1.10601 \cdot 1.21224\,n\,\log n}{\log^{2}n} < \dfrac{1.35\,n}{\log n}.
\end{equation}
Combining \eqref{logloglemmaeq2}, \eqref{logloglemmaeq2pt5} and \eqref{logloglemmaeq3}, we obtain from \eqref{logloglemmaeq1} that
\[
\int_{2}^{p_{n}} \dfrac{\pi(t)}{t \log t} \,dt < \dfrac{1.83 \,n}{\log n}.
\]
To finish our proof, it suffices now to show that
\[
\dfrac{0.9 \,n \log\log n}{\log n} \ge \dfrac{1.83\,n}{\log n}.
\]
As $0.9 \,\log\log(10^{5}) = 2.1991\ldots$, the lemma follows.
\end{proof}

We turn now to a proof of Lemma~\ref{smoothbd}.   
Every $p_{N}$-smooth number greater than $p_{N} K$ can be divided by at least one prime $p_{j}$, with $1 \le j \le N$, to obtain a smaller $p_{N}$-smooth number which is greater than $K$.  As a consequence, every $p_{N}$-smooth number greater than $K$ can be written in at least one way as the product of a $p_{N}$-smooth number in the interval $(K, p_{N} K]$ times a $p_{N}$-smooth number.  Hence, we deduce 
\begin{equation}
\label{smoothbdproofneweq1}
\begin{split}
\sum_{\substack{m \in S \\ m > K}} \dfrac{1}{m}
&\le \sum_{\substack{K < u \le p_{N} K \\ u \text{ $p_{N}$-smooth}}} \dfrac{1}{u} \prod_{1 \le j \le N} \bigg(  1 + \dfrac{1}{p_{j}-1}  \bigg) \\[5pt]
&\le \dfrac{|\{ u \in (K,p_{N}K]: u \text{ is $p_{N}$-smooth} \}|}{K} \prod_{1 \le j \le N} \bigg(  1 + \dfrac{1}{p_{j}-1}  \bigg).
\end{split}
\end{equation}
It suffices to establish Lemma~\ref{smoothbd} with a smaller value of $K$ than given there, and we do so by taking instead 
\[
K = N^{cN},
\]
where $c = 0.02250022$ in the case of $\Delta = 1/12$ and where $c > 0$ is to be determined in the case of $0 < \Delta < 1/12$.  Note that the choice of $c$ for $\Delta = 1/12$ is simply a very good approximation of the $K$ stated but put in a convenient form for the argument given below.

By an estimate of A.~Granville \cite[Section 3.2]{granville}, we have
\begin{equation}
\label{smoothbdproofneweq2}
\begin{split}
|\{ u \in (K,p_{N}K]: &u \text{ is $p_{N}$-smooth} \}| \\
&\le |\{ u \in [1,p_{N}K]: u \text{ is $p_{N}$-smooth} \}|
\le \dfrac{1}{N!} \prod_{p \le p_{N}} \dfrac{\log X}{\log p},
\end{split}
\end{equation}
where
\[
\log X = \log p_{N} + \log K + \sum_{p \le p_{N}} \log p
= \log p_{N} + c N \log N + \sum_{p \le p_{N}} \log p.
 \]
For our choice of $N$, we see that $p_{N} \ge e^{22}$ so that 
\cite[Theorem 7*]{schoen} implies
\[
\sum_{p \le p_{N}} \log p \le p_{N} \bigg(  1 + \dfrac{0.0077629}{\log p_{N}}  \bigg).
\]
The function $x(1+0.0077629/\log x)$ is increasing for $x > 3$ so we can use the upper bound $\tau_{2}$ on $p_{N}$ in \eqref{taudefs} to obtain
\[
\log X < \log \tau_{2} + c N \log N + \tau_{2} \cdot \bigg(  1 + \dfrac{0.0077629}{\log \tau_{2}}  \bigg) < (c+1 + E) \,N \log N,
\]
where 
\[
E = \dfrac{\log \tau_{2}}{N \log N} + \dfrac{0.0077629 \,\tau_{2}}{\log (\tau_{2}) N \log N} 
+ \dfrac{\log\log N}{\log N}.
\]
As $N \ge 1.5 \cdot 10^{21}$ (see \eqref{thenvalue}), we see that
\[
\tau_{2} \le N \log N \bigg(  1 + \dfrac{\log\log N}{\log N}  \bigg)
\le N \log N \bigg(  1 + \dfrac{\log\log (1.5 \cdot 10^{21})}{\log (1.5 \cdot 10^{21})}  \bigg)
< 1.08 \,N \log N,
\]
so that
\begin{align*}
\dfrac{\log \tau_{2}}{N \log N}  &< \dfrac{\log (1.08\,N \log N)}{N \log N} 
= \dfrac{\log 1.08}{N \log N} + \dfrac{1}{N} + \dfrac{\log\log N}{N \log N} \\[5pt]
&\le \dfrac{\log 1.08}{1.5 \cdot 10^{21} \log (1.5 \cdot 10^{21})} + \dfrac{1}{1.5 \cdot 10^{21}} + \dfrac{\log\log (1.5 \cdot 10^{21})}{1.5 \cdot 10^{21} \,\log (1.5 \cdot 10^{21})} < 10^{-21}.
\end{align*}
Also, we have
\[
\dfrac{0.0077629 \,\tau_{2}}{\log (\tau_{2}) N \log N} 
< \dfrac{0.0077629 \cdot 1.08}{\log \tau_{2}} 
< 0.00016,
\]
where the last inequality is obtained by replacing $N$ with $1.5 \cdot 10^{21}$ in the definition of $\tau_{2}$ in \eqref{taudefs}.  
Similarly, we obtain
\[
\dfrac{\log\log N}{\log N} \le \dfrac{\log\log (1.5 \cdot 10^{21})}{\log (1.5 \cdot 10^{21})} < 0.07972.
\]
We deduce then that $E < 0.08$ so that
\[
\log X < (c + 1.08)\,N \log N.
\]
From Lemma~\ref{logloglemma}, we also have
\[
\log \prod_{p \le p_{N}} \log p = \sum_{p \le p_{N}} \log\log p \ge N \log\log N.
\]
Hence,
\[
\prod_{p \le p_{N}} \log p \ge (\log N)^{N}.
\]
Using $e^{N}$ is greater than the term $N^{N}/N!$ in the Maclaurin series for $e^{N}$ in $N$, we see that $N! > (N/e)^{N}$.  Therefore, we deduce from \eqref{smoothbdproofneweq2} that
\begin{equation}
\label{smoothbdproofneweq3}
|\{ u \in (K,p_{N}K]: u \text{ is $p_{N}$-smooth} \}|
\le \dfrac{e^{N}}{N^{N}} \cdot \dfrac{(\log X)^{N}}{(\log N)^{N}}
\le (e \cdot (c+1.08))^{N}.
\end{equation}

Now, we restrict our consideration to $\Delta = 1/12$.  
The product in \eqref{smoothbdproofneweq1} can be estimated using \eqref{appIIIdisplay1} with $i-1$ replaced by $N = 1.5320302 \cdot 10^{21}$, the fact that $\log x + 1/(2 \log x)$ is increasing for $x > 3$, and the upper bound $\tau_{2}$ on $p_{N}$ in \eqref{taudefs}.  We deduce
\[
\prod_{1 \le j \le N} \bigg(  1 + \dfrac{1}{p_{j}-1}  \bigg)
< e^{\gamma} (\log{\tau_{2}}) \bigg(  1 + \dfrac{1}{2 \log^{2}\tau_{2}}  \bigg)
< 94.
\]
Recalling $K = N^{cN}$, we deduce from \eqref{smoothbdproofneweq1} that
\[
\sum_{\substack{m \in S \\ m > K}} \dfrac{1}{m} < \dfrac{94 \,(e \cdot (c+1.08))^{N}}{N^{cN}} < 10^{-10^{13}}.
\]
Thus, the case $\Delta = 1/12$ is complete.

For $0 < \Delta < 1/12$, we recall from \eqref{tau2bd} that $p_{N} < \tau_{2} < 1.07875 \,N \,\log N$. 
Furthermore, from \eqref{thenvalue}, we have $N \ge N_{\text{min}} = 3.36 \cdot 10^{21}$, so \eqref{theloglogestimate} implies now that
\begin{align*}
\log p_{N} &< \log(N \log N) \bigg(  1 + \dfrac{\log 1.07875}{\log (N \log N)}  \bigg) \\[5pt]
&\le \log(N \log N) \bigg(  1 + \dfrac{\log 1.07875}{\log \big(N_{\text{min}} \log N_{\text{min}}\big)} \bigg) \\[5pt]
&< (1.07875 \log N)\cdot 1.00142 < 1.0803 \log N.
\end{align*}
In this case, from \eqref{appIIIdisplay1} and the fact that $\log x + 1/(2 \log x)$ is increasing for $x > 3$, we obtain
\begin{align*}
\prod_{1 \le j \le N} \bigg(  1 + \dfrac{1}{p_{j}-1}  \bigg)
&< \exp(\gamma) \bigg(\!\log p_{N} + \dfrac{1}{2 \log p_{N}}  \bigg) \\[5pt]
&< \exp(\gamma) \bigg(\!1.0803 \log N + \dfrac{1}{2.1606 \log N}  \bigg) \\[5pt]
&< \exp(\gamma) \bigg(\!1.0803 \log N + \dfrac{\log N}{2.1606 \log^{2} (N_{\text{min}})}  \bigg) \\[5pt]
&< 1.92443 \,\log N.
\end{align*}
From \eqref{smoothbdproofneweq1} and \eqref{smoothbdproofneweq3} and our choice of $K = N^{cN}$, with $c > 0$ to be chosen, we deduce
\begin{equation}
\label{smoothbdproofeq1}
\sum_{\substack{m \in S \\ m \ge K}} \dfrac{1}{m}
< \dfrac{1.92443 \,\log N}{N^{cN}} \cdot \big(e \cdot (c+1.08)\big)^{N}.
\end{equation}

We want an upper bound for the right-hand side of \eqref{smoothbdproofeq1}.  
The function $\log(\log x)/(x \,\log x)$ is decreasing for $x > 5$.  
Since $N \ge 3.36 \cdot 10^{21}$, we see that
\begin{align*}
&\dfrac{\log(1.92443) + \log\log N + N \,\log\big(e(c+1.08)\big) - c\,N\,\log N}{N\,\log N} 
\\[5pt] &\qquad\qquad 
= -c + \dfrac{1}{\log N} + \dfrac{\log (c+1.08)}{\log N} + \dfrac{\log\log N}{N \,\log N} + \dfrac{\log 1.92443}{N \,\log N} 
\\[5pt] &\qquad\qquad
< -c + 0.02018 \log (c+1.08) + 0.02018.
\end{align*}
We take $c = 0.022143$ to deduce from \eqref{smoothbdproofeq1} that
\begin{equation}
\label{proofbdonsum}
\sum_{\substack{m \in S \\ m \ge K}} \dfrac{1}{m} 
< \exp(-3.645\cdot 10^{-7} \cdot N \log N),
\end{equation}
with
\begin{equation}
\label{proofkvalue}
K = N^{cN} = \exp(0.022143\,N\,\log N).
\end{equation}
Now, it is just a matter of rewriting the expressions on the right of 
\eqref{proofbdonsum} and \eqref{proofkvalue}.  
From the bound on $N$ in Lemma~\ref{differencelem}, we see that 
\[
N \le \dfrac{2.8 \cdot 10^{20}}{\Delta} \cdot \dfrac{2.8 \cdot 10^{20}/\Delta+1}{2.8 \cdot 10^{20}/\Delta}
\le \dfrac{2.8 \cdot 10^{20}}{\Delta} \cdot \dfrac{2.8 \cdot 10^{20} \cdot 12+1}{2.8 \cdot 10^{20} \cdot 12}
< \dfrac{2.800001 \cdot 10^{20}}{\Delta}
\]
and
\[
\log N < \log\big(2.800001 \cdot 10^{20}\big) - \log(\Delta) < 47.1 - \log(\Delta).
\]
Corresponding inequalities in the opposite direction are
\[
N \ge \dfrac{2.8 \cdot 10^{20}}{\Delta} 
\qquad \text{ and } \qquad
\log N \ge 47 - \log(\Delta).
\]
Thus, 
\[
\dfrac{1.31 \cdot 10^{22} - 2.8 \cdot 10^{20} \cdot \log \Delta}{\Delta}
< N \,\log N < \dfrac{1.32 \cdot 10^{22} - 2.800001 \cdot 10^{20} \cdot \log \Delta}{\Delta},
\]
where we note that $\log \Delta < 0$ so that the numerators here can be viewed as a sum of two positive numbers.  Recall that we can increase the value of $K$ and the upper bound in \eqref{proofbdonsum} will still hold.  Now, using the lower bound for $N \,\log N$ in \eqref{proofbdonsum} and the upper bound in \eqref{proofkvalue} gives Lemma~\ref{smoothbd}.

\section{Concluding remarks}
Theorems~\ref{mainthmone} and \ref{mainthmoneptfive} are closely related to problems associated with infinite covering systems.  The infinite case is in fact the main topic of the paper by E.~Lewis \cite{lewis} which has played a role in this paper.  It would be very nice to settle the main problem in this area which we describe next.

It is fairly easy to construct infinite exact distinct covering systems in which the sum of the reciprocals of the moduli is less than $\varepsilon$ for any prescribed value of $\varepsilon > 0$.  One way to see this is to observe that if $\{ k_{1}, k_{2}, \ldots \}$ is any ordering of the integers and $m_{j} = 2^{t+j}$ where $t$ is fixed but large, then one can form an infinite system of congruences that covers the integers by starting with $x \equiv k_{1} \pmod{m_{1}}$ and, for each $j > 1$, iteratively choosing the smallest $i$ such that $k_{i}$ does not satisfy a constructed congruence in the system (with modulus smaller than $m_{j}$) and choosing $x \equiv k_{i} \pmod{m_{j}}$ as a congruence in the system.  
With this in mind, we limit consideration, as done elsewhere in the literature, to infinite \textit{saturated} covering systems defined next.  

\begin{definition}
If a covering system is infinite, then the covering system is \textit{saturated} if the sum of the reciprocals of the moduli in the covering system is equal to $1$.  
\end{definition}
  
In 1958, S.~K.~Stein~\cite{stein} asked whether infinite exact distinct saturated covering systems exist with moduli different from the powers of $2$ greater than $1$.  
This question was addressed by C.~E.~Krukenberg~\cite{krukenberg} and later by J.~Beebee~\cite{beebee}.  In particular, there exist infinite exact distinct saturated covering systems with minimum modulus $m_{0} \in \{ 2, 3, 4 \}$.  
%% Their examples, however, require some modifications (see the earlier remarks on these papers).  

Both Krukenberg~\cite{krukenberg}, in the latter part of his dissertation, and Beebee~\cite{beebee} considered the problem of showing that there do no exist infinite exact distinct saturated covering systems with minimum modulus $m_{0} \ge 5$.  Beebee~\cite{beebee} specifically proposed the problem of determining whether such coverings exist.  A nice accomplishment in the direction of answering this question was obtained by E.~Lewis~\cite{lewis} who showed that infinitely many primes would necessarily divide the moduli in any infinite exact distinct saturated covering system with minimum modulus $\ge 5$.  Further work was done by A.~S.~Fraenkel and R.~J.~Simpson~\cite{fs1993} and closely related work by J.~Bar\'at and P.~Varj\'u~\cite{bv2005} and R.~J.~Simpson and D.~Zeilberger~\cite{sz1991}, among others.
The problem of Beebee~\cite{beebee} on the existence of infinite exact distinct saturated covering systems with minimum modulus $m_{0} \ge 5$ is mentioned again in \cite{fs1993} as well as in \cite[Section 14]{guy}.

One of the difficulties in settling the existence of such coverings is not having an infinite analog of the distortion method of P.~Balister, B.~Bollob{\'a}s, R.~Morris, J.~Sahasrabudhe and M.~Tiba~\cite{bbmst} that played an important role in this paper.  However, we provide below a possible direction for progress.

The probability spaces constructed via the distortion method are a function of the system of arithmetic progressions under consideration.  Although in our tail bounds, we add up an infinite sum of probabilities of events, that is merely expressing an upper bound, without any assumption that in fact we include infinitely many arithmetic progressions. One might, however, be able to exclude the possibility of infinite exact distinct saturated covering systems with minimum modulus $m_{0} \ge 5$ provided one can construct the right probability space, where our previous arguments will transfer.  Perhaps the most natural space would be $\hat{\mathbb{Z}}$, the profinite completion of the group of integers $\mathbb Z$ (see \cite{ribeszalesskii}).  
%%  Aside from possible connections to this infinite covering problem, there are other aspects that make this an interesting topic for invesitation, and we provide some indications here.

Some of the seemingly counterintuitive statements find a more natural explanation in $\mathbb Z$.  At the beginning of this section, we gave  an example of an infinite covering of the integers, where the sum of the reciprocals is less than any prescribed $\varepsilon>0$.  An interpretation of this is that we constructed an open cover of $\mathbb{Z}$ inside $\hat{\mathbb{Z}}$ by small balls.  The volume of the balls and the reciprocals of the moduli coincide under the standard normalization, and it is not surprising that such an infinite covering exists.  On the other hand, in the case of a finite covering, a covering of $\mathbb{Z}$ would necessarily also be a covering of $\hat{\mathbb{Z}}$ and have the sum of the reciprocals of its moduli at least $1$.  Further, for every covering of $\hat{\mathbb{Z}}$ by finitely many or by infinitely many congruences, we must in fact have the sum of the reciprocals of the moduli is at least $1$.  Thus, in the setting of covering $\hat{\mathbb{Z}}$ instead of $\mathbb{Z}$, restricting to exact coverings which are saturated is quite natural.  Restricting infinite exact covering systems to those for which the sum of the reciprocals of the moduli in the covering system is equal to $1$ also serves an important purpose.  For Lewis \cite{lewis}, it  establishes that the natural density of the integers covered by a system of congruences from such a covering is given by the sum of the reciprocals of the moduli.  It would be fruitful then to compare the extent of the notion of a saturated covering, with that of a partition of a candidate probability space for the existence of an exact distinct saturated infinite covering system with minimum modulus exceeding $4$. %% This could be a possible step in establishing the analogy with the finite case.

%% Finally, in recent unpublished work, the authors have discovered results that suggest that what happens at the local level, is in a sense the `truth' of the matter, by finding a connection between non-trivial local densities of sets $A\subset \mathbb{Z}$, and the demand of a minimum modulus to cover $A$. It should be noted that the sets $A$ in our work have zero natural/ assymptotic density.

\vskip 10pt \noindent
\textbf{Acknowledgment:}
We express our gratitude to Ognian Trifonov for looking over parts of this paper and for fruitful conversations related to the general topic.

\end{document}